\newtheorem{definition}{Definition}
\newtheorem{lemma}{Lemma}
\newtheorem{theorem}{Theorem}
\newtheorem{example}{Example}
\newtheorem{corollary}{Corollary}
\newtheorem{observation}{Observation}
\begin{document}
\title{Continued Fractions of Quadratic Numbers}
\author{L\!'ubomíra Balková, Aranka Hrušková}

%
\maketitle
\begin{abstract}
In this paper, we will first summarize known results concerning continued fractions.
Then we will limit our consideration to continued fractions of quadratic numbers.
The second author described periods and sometimes precise form of continued fractions of $\sqrt{N}$, where $N$
is a natural number.
In cases where we were able to find such results in literature,
we recall the original authors, however many results seem to be new.
\end{abstract}
\section{Introduction}
Continued fractions have a~long history behind them - their origin may be placed to the age of Euclid's algorithm for the greatest common divisor or even earlier. However, they experience a~revival nowadays thanks to their applications in high-speed and high-accuracy computer arithmetics.
Some of the advantages of continued fractions in computer arithmetics are: faster division and multiplication than with positional number representations, fast and precise evaluation of trigonometric, logarithmic and other functions, precise representation of transcendental numbers, no roundoff or truncation errors (\cite{Se}, Kahan's method in~\cite{Mu} page 179).
\section{Continued fractions}
Here are some basic definitions and results that can be found in any number theory course~\cite{Kl, MaPe, La}.
We use $\lfloor x \rfloor$ to denote the integer part of a~real number $x$.
\begin{definition}
The continued fraction (expansion) of a real number $x$ is the sequence of integers $(a_n)_{n \in \mathbb N}$ obtained by the following algorithm
$$x_0=x, \quad a_n=\lfloor x_n\rfloor, \quad x_{n+1}=\left\{\begin{array}{l}\frac{1}{x_n-a_n} \ \text{if $x_n \not \in \mathbb Z$},\\
0 \ \text{otherwise.}\end{array}\right.$$
\end{definition}
Note that $a_0 \in \mathbb Z$ and $a_n \in \mathbb N$.
The algorithm producing the continued fraction is closely related to the Euclidean algorithm for computing the greatest common divisor of two integers. It is thus readily seen that if the number $x$ is rational, the algorithm eventually produces zeroes, i.e. there exists $N \in \mathbb N$ such that $a_n=0$ for all $n> N$, thus
\begin{equation} \label{finite_CF}
x=a_0+\cfrac{1}{a_1+\cfrac{1}{a_2+\cfrac{1}{\ddots+\cfrac{1}{{a_{N-1}+\cfrac{1}{a_N}}}}}}\ .
\end{equation}

We write $x=[a_0, \dots, a_N]$.
On the other hand, if we want to find an expression of the form~\eqref{finite_CF} with $a_0 \in \mathbb Z$ and
$a_n \in \mathbb N\setminus \{0\}$ otherwise, then there are exactly two of them -- the continued fraction $[a_0, \dots, a_N]$ and
$$
x=a_0+\cfrac{1}{a_1+\cfrac{1}{a_2+\cfrac{1}{\ddots+\cfrac{1}{{a_{N-1}+\cfrac{1}{(a_N-1)+\cfrac{1}{1}}}}}}}\ .
$$

If the number $x$ is irrational, then the sequence of the so-called {\em convergents}
\begin{equation}\label{convergents}
a_0,\ a_0+\cfrac{1}{a_1},\ \dots, a_0+\cfrac{1}{a_1+\cfrac{1}{a_2+\cfrac{1}{\ddots+\cfrac{1}{{a_{n-1}+\cfrac{1}{a_n}}}}}}
\end{equation}
converges to $x$ for $n \to \infty$.
On the other hand, every sequence of rational numbers of the form~\eqref{convergents} with $a_0 \in \mathbb Z$ and $a_n \in \mathbb N\setminus \{0\}$ converges to an irrational number (and for every irrational number there is only one such sequence -- the sequence of convergents).
We write $x=[a_0, \dots, a_n, \dots]$.

The convergents of the continued fraction are known to represent irrational numbers better than any other fractions.
\begin{theorem}[Lagrange]
Let $x \in \mathbb R \setminus \mathbb Q$ and let $\frac{p_n}{q_n}$ be its $n$-th convergent (where $p_n$ and $q_n$ are coprime) and let $\frac{p}{q}$ with $p,q \in \mathbb Z$ be distinct from $\frac{p_n}{q_n}$ and such that $0 < q\leq q_n$. Then $$\left|x-\frac{p_n}{q_n}\right|<\left|x-\frac{p}{q}\right|.$$
\end{theorem}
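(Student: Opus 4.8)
The plan is to deduce the stated inequality from the stronger \emph{best approximation of the second kind} estimate
$$|q_n x - p_n| < |qx - p|, \qquad 0 < q \le q_n,\ (p,q)\neq(p_n,q_n),$$
which I will prove for $n\ge 1$. (For $n=0$ the convergent is $\lfloor x\rfloor$, which need not be the nearest integer, so the statement is really meant for $n\ge 1$.) Once the second-kind estimate is in hand, the theorem follows in one line, since $0<q\le q_n$ gives
$$\left|x-\frac{p_n}{q_n}\right|=\frac{|q_n x-p_n|}{q_n}<\frac{|qx-p|}{q_n}\le\frac{|qx-p|}{q}=\left|x-\frac pq\right|.$$

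First I would recall the standard machinery of convergents $\frac{p_n}{q_n}=[a_0,\dots,a_n]$: the recurrences $p_n=a_np_{n-1}+p_{n-2}$ and $q_n=a_nq_{n-1}+q_{n-2}$, the determinant identity $p_nq_{n-1}-p_{n-1}q_n=(-1)^{n-1}$, and the fact that $(q_n)$ is positive and nondecreasing. Writing $x_{n+1}>1$ for the $(n+1)$-st complete quotient and using $x=\frac{x_{n+1}p_n+p_{n-1}}{x_{n+1}q_n+q_{n-1}}$, a short computation gives
$$q_nx-p_n=\frac{(-1)^n}{x_{n+1}q_n+q_{n-1}},$$
from which I read off the two facts I will need: the errors $q_nx-p_n$ and $q_{n-1}x-p_{n-1}$ have opposite signs, and $|q_nx-p_n|$ is strictly decreasing in $n$ (the denominators $x_{n+1}q_n+q_{n-1}$ strictly increase because $x_{n+1}>1$).

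The heart of the proof is the second-kind estimate. Because the determinant $p_nq_{n-1}-p_{n-1}q_n=\pm1$, the integer matrix with columns $(p_n,q_n)$ and $(p_{n-1},q_{n-1})$ is unimodular, so every integer pair $(p,q)$ can be written uniquely as
$$p=up_n+vp_{n-1},\qquad q=uq_n+vq_{n-1},\qquad u,v\in\mathbb Z,$$
whence $qx-p=u(q_nx-p_n)+v(q_{n-1}x-p_{n-1})$. I would then argue from $0<q\le q_n$ and $(p,q)\neq(p_n,q_n)$ that $v\neq0$, and that whenever $u\neq0$ the integers $u,v$ must have opposite signs (otherwise $|q|=|u|q_n+|v|q_{n-1}>q_n$). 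Combined with the opposite signs of the two error terms, this makes the two summands of $qx-p$ have the \emph{same} sign, so there is no cancellation and
$$|qx-p|=|u|\,|q_nx-p_n|+|v|\,|q_{n-1}x-p_{n-1}|\ge|q_{n-1}x-p_{n-1}|>|q_nx-p_n|,$$
using $|v|\ge1$ and the strict monotonicity. The remaining case $u=0$ forces $q=vq_{n-1}$ with $|v|\ge1$, yielding the same chain of inequalities directly.

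I expect the main obstacle to be bookkeeping the signs cleanly -- both the alternation of $q_nx-p_n$ and the claim that $u$ and $v$ have opposite signs -- together with the low-index edge cases, where $q_n$ can equal $q_{n-1}$ (for instance $q_0=q_1$ when $a_1=1$) and where one must verify that $q_{n-1}\ge1$ so that the factor $|v|\ge1$ really dominates. These are precisely the places where a careless argument would lose the strictness of the inequality or mishandle $n=1$.
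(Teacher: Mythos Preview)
The paper does not actually prove this theorem; it is listed in Section~2 among the ``basic definitions and results that can be found in any number theory course'' and stated without proof, with a reference to standard texts. So there is no argument in the paper to compare yours against.

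Your proof is the standard one (essentially the argument found in Khinchin or Hardy--Wright): reduce the first-kind best-approximation inequality to the second-kind estimate $|q_nx-p_n|<|qx-p|$, and prove the latter by writing $(p,q)$ in the unimodular basis $(p_n,q_n),(p_{n-1},q_{n-1})$ and analysing signs. The reasoning is correct, including your remark that the case $n=0$ must be excluded because $\lfloor x\rfloor$ need not be the nearest integer to $x$---a caveat the paper's statement glosses over. The edge cases you flag are genuine but harmless: for $n\ge 1$ one has $q_{n-1}\ge q_0=1$, so the opposite-sign argument and the strict inequality $|q_{n-1}x-p_{n-1}|>|q_nx-p_n|$ go through exactly as you outline.
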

It is also known how well the continued fractions approximate irrational numbers.
\begin{theorem}
Let $x \in \mathbb R \setminus \mathbb Q$ and let $\frac{p_n}{q_n}$ be its $n$-th convergent (where $p_n$ and $q_n$ are coprime).
Then either $$\left|x-\frac{p_n}{q_n}\right|<\frac{1}{2q_n^2} \ \text{or} \ \left|x-\frac{p_{n+1}}{q_{n+1}}\right|<\frac{1}{2q_{n+1}^2}.$$
\end{theorem}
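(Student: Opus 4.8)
The plan is to lean on two standard structural facts about the convergents $\frac{p_n}{q_n}$, both of which follow from the recurrences $p_n = a_n p_{n-1} + p_{n-2}$ and $q_n = a_n q_{n-1} + q_{n-2}$ (with the usual initialization $p_{-1}=1,\ q_{-1}=0,\ p_0=a_0,\ q_0=1$). The first is the determinant identity $p_{n+1}q_n - p_n q_{n+1} = (-1)^n$, which immediately gives
$$\left|\frac{p_{n+1}}{q_{n+1}} - \frac{p_n}{q_n}\right| = \frac{1}{q_n q_{n+1}}.$$
The second is that $x$ always lies strictly between two consecutive convergents $\frac{p_n}{q_n}$ and $\frac{p_{n+1}}{q_{n+1}}$ (the even-indexed convergents increase towards $x$, the odd-indexed ones decrease towards it).

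Combining these, since $x$ is squeezed between the two consecutive convergents, the two approximation errors add up exactly to the gap between those convergents:
$$\left|x - \frac{p_n}{q_n}\right| + \left|x - \frac{p_{n+1}}{q_{n+1}}\right| = \left|\frac{p_{n+1}}{q_{n+1}} - \frac{p_n}{q_n}\right| = \frac{1}{q_n q_{n+1}}.$$

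I would then argue by contradiction. Suppose both inequalities in the statement fail, so that $\left|x - \frac{p_n}{q_n}\right| \ge \frac{1}{2q_n^2}$ and $\left|x - \frac{p_{n+1}}{q_{n+1}}\right| \ge \frac{1}{2q_{n+1}^2}$. Because $x$ is irrational, neither error can actually equal the rational quantity on its right (equality would force $x = \frac{p_n}{q_n} \pm \frac{1}{2q_n^2} \in \mathbb{Q}$), so both inequalities are in fact strict. Adding them and comparing with the identity above yields
$$\frac{1}{q_n q_{n+1}} > \frac{1}{2q_n^2} + \frac{1}{2q_{n+1}^2}.$$
But the arithmetic–geometric mean inequality gives $\frac{1}{2q_n^2} + \frac{1}{2q_{n+1}^2} \ge \frac{1}{q_n q_{n+1}}$, a contradiction. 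Hence at least one of the two inequalities must hold.

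The main obstacle is not the closing estimate, which is a one-line AM--GM computation once the framing is set up, but rather justifying the two structural facts, the determinant identity and the straddling property, from the convergent recurrences; these are what genuinely encode the behaviour of continued fractions. A subtle point worth flagging is the use of irrationality to upgrade each ``$\ge$'' to a strict ``$>$'': this is precisely what keeps the AM--GM contradiction airtight even in the degenerate case $q_n = q_{n+1}$ (which can occur only for $n=0$ with $a_1 = 1$), where the two means coincide and a non-strict argument would stall.
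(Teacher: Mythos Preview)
Your argument is correct and is in fact the standard textbook proof of this result. Note, however, that the paper does not supply its own proof of this theorem: it is stated in Section~2 as one of several ``basic definitions and results that can be found in any number theory course'' with a citation to \cite{Kl, MaPe, La}, so there is nothing in the paper to compare your approach against. Your handling of the edge case $q_n=q_{n+1}$ via the irrationality of $x$ is a nice touch that many presentations gloss over.
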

And in a certain way, only continued fractions get very close to irrational numbers.
\begin{theorem}[Legendre]
Let $x \in \mathbb R \setminus \mathbb Q$ and let $\frac{p}{q}$ with $p,q \in \mathbb Z$ satisfy
$|x-\frac{p}{q}|<\frac{1}{2q^2}$. Then $\frac{p}{q}$ is a~convergent of $x$.
\end{theorem}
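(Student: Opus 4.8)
The plan is to prove this converse to the preceding approximation theorem (Legendre's theorem) by exhibiting $\frac{p}{q}$ explicitly as one of the convergents of $x$. First I would reduce to the case $\gcd(p,q)=1$ and $q>0$: if $\frac{p}{q}=\frac{p'}{q'}$ in lowest terms then $q'\le q$, so $\left|x-\frac{p'}{q'}\right|=\left|x-\frac{p}{q}\right|<\frac{1}{2q^2}\le\frac{1}{2q'^2}$, and it suffices to treat the reduced fraction (convergents are automatically in lowest terms). Next I would expand $\frac{p}{q}$ as a \emph{finite} continued fraction $[a_0,\dots,a_n]$, using the freedom described above — the two representations $[a_0,\dots,a_N]$ and $[a_0,\dots,a_N-1,1]$ — to fix the parity of $n$ so that $(-1)^n$ has the same sign as $x-\frac{p}{q}$. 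Writing $\frac{p_k}{q_k}$ for the convergents of this finite expansion, so that $(p_n,q_n)=(p,q)$, I would invoke the standard recurrence $q_k=a_kq_{k-1}+q_{k-2}$ and the determinant identity $p_kq_{k-1}-p_{k-1}q_k=(-1)^{k-1}$.

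The heart of the argument is to introduce the real number $\lambda$ determined by $x=\frac{\lambda p_n+p_{n-1}}{\lambda q_n+q_{n-1}}$; solving this linear-fractional relation gives $\lambda=\frac{xq_{n-1}-p_{n-1}}{p_n-xq_n}$, which is irrational because $x$ is (otherwise $x$ would be a ratio of rationals). Using the determinant identity, a direct computation yields
$$x-\frac{p_n}{q_n}=\frac{(-1)^n}{q_n(\lambda q_n+q_{n-1})}.$$
By the parity choice the sign of the right-hand side forces $\lambda q_n+q_{n-1}>0$, so that $\left|x-\frac{p_n}{q_n}\right|=\frac{1}{q_n(\lambda q_n+q_{n-1})}$.

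Now comes the step I expect to be the crux. Feeding this into the hypothesis $\left|x-\frac{p}{q}\right|<\frac{1}{2q_n^2}$ and clearing the (positive) denominators gives $\lambda q_n+q_{n-1}>2q_n$, hence $\lambda>2-\frac{q_{n-1}}{q_n}$. Since the recurrence forces $q_{n-1}\le q_n$, this yields $\lambda>1$. This is exactly where the constant $\tfrac12$ is essential: a weaker hypothesis such as $\left|x-\frac{p}{q}\right|<\frac{1}{q^2}$ would only give $\lambda>0$, which is not enough. The main obstacle is thus the bookkeeping that ties the parity choice (which guarantees $\lambda q_n+q_{n-1}>0$, so the absolute values may be dropped) together with the bound $q_{n-1}\le q_n$ in order to push $\lambda$ strictly past $1$.

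Finally, since $\lambda>1$ is irrational it has an infinite continued fraction expansion $\lambda=[c_0,c_1,\dots]$ with $c_0=\lfloor\lambda\rfloor\ge1$. Substituting back, $x=[a_0,\dots,a_n,c_0,c_1,\dots]$ is a genuine continued fraction (every partial quotient past $a_0$ is a positive integer), so by the uniqueness of the continued fraction expansion of an irrational number it \emph{is} the expansion of $x$. Its $n$-th convergent depends only on $a_0,\dots,a_n$ and equals $\frac{p_n}{q_n}=\frac{p}{q}$; hence $\frac{p}{q}$ is a convergent of $x$, as claimed.
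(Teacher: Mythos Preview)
Your argument is correct and is in fact the classical proof of Legendre's theorem (essentially as in Hardy--Wright or Khinchin). Note, however, that the paper does not actually supply a proof of this statement: it is listed among the background results quoted from standard number-theory references, so there is no ``paper's own proof'' to compare against. Your write-up would serve perfectly well as the missing proof; the only places that deserve an extra word of care are the edge case $n=0$ (where $q_{-1}=0$, $q_0=1$, and the bound $q_{n-1}\le q_n$ still holds) and the verification that the parity choice is always available even when $q=1$, both of which you handle implicitly via the two finite representations $[a_0,\dots,a_N]$ and $[a_0,\dots,a_N-1,1]$ recalled earlier in the paper.
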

\section{Continued fractions and continuants}
The convergents of continued fractions are closely related to the so-called {\em continuants} $K_n(x_1,\dots, x_n)$.
\begin{theorem}\label{CF_polynomials}
Let $x_0 \in \mathbb R, \ x_i >0,\ i \in \mathbb N$. Then it holds
$$x_0+\cfrac{1}{x_1+\cfrac{1}{{\ddots+\cfrac{1}{x_n}}}}=\frac{K_{n+1}(x_0,x_1,\dots, x_n)}{K_n(x_1,\dots,x_n)},$$
where the polynomial $K_{n}(x_1,\dots, x_n)$ is given by the recurrence relation $K_{-1}=0, \ K_0=1$ and for $n\geq 1$
$K_{n}(x_1,\dots,x_n)=K_{n-2}(x_1,\dots, x_{n-2})+x_nK_{n-1}(x_1,\dots, x_{n-1})$.
\end{theorem}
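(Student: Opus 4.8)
The plan is to induct on $n$, relying only on the defining (last-argument) recurrence for the continuants together with a single structural identity for continued fractions. Write $[x_0;x_1,\dots,x_n]$ for the left-hand side. The base case $n=0$ is immediate: the claim reads $x_0=K_1(x_0)/K_0$, and indeed $K_0=1$ and $K_1(x_0)=K_{-1}+x_0K_0=x_0$.

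For the inductive step I would assume the statement with $n$ replaced by $n-1$ and exploit the \emph{collapsing identity}
$$[x_0;x_1,\dots,x_{n-1},x_n]=\Big[x_0;x_1,\dots,x_{n-2},\,x_{n-1}+\tfrac{1}{x_n}\Big],$$
which just performs the innermost addition. The right-hand side is a continued fraction with one fewer level whose final entry $y:=x_{n-1}+\tfrac{1}{x_n}$ is positive, so the induction hypothesis applies to the tuple $(x_0,\dots,x_{n-2},y)$ and yields
$$[x_0;x_1,\dots,x_{n-2},y]=\frac{K_n(x_0,\dots,x_{n-2},y)}{K_{n-1}(x_1,\dots,x_{n-2},y)}.$$
It then remains to show that the substitution $y=x_{n-1}+\tfrac{1}{x_n}$, after clearing the factor $x_n$, carries these two continuants onto $K_{n+1}(x_0,\dots,x_n)$ and $K_n(x_1,\dots,x_n)$.

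To do this I would expand $K_n(x_0,\dots,x_{n-2},y)$ by the recurrence in its last argument $y$; since $K_{n-1}(x_0,\dots,x_{n-2})$ and $K_{n-2}(x_0,\dots,x_{n-3})$ contain no $y$, substituting and regrouping gives
$$K_n(x_0,\dots,x_{n-2},y)=K_n(x_0,\dots,x_{n-1})+\tfrac{1}{x_n}\,K_{n-1}(x_0,\dots,x_{n-2}),$$
where the first summand is read off by running the recurrence backwards. Multiplying by $x_n$ and applying the recurrence once more to $K_{n+1}(x_0,\dots,x_n)=x_nK_n(x_0,\dots,x_{n-1})+K_{n-1}(x_0,\dots,x_{n-2})$ collapses the right-hand side exactly to $K_{n+1}(x_0,\dots,x_n)$. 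The identical computation on the denominator produces $x_nK_{n-1}(x_1,\dots,x_{n-2},y)=K_n(x_1,\dots,x_n)$, so the common factor $x_n$ cancels in the ratio and the induction closes.

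I expect the main obstacle to be bookkeeping rather than ideas: keeping the argument lists of the nested continuants aligned through the substitution and checking that the stray $\tfrac{1}{x_n}$ is precisely what the extra application of the recurrence absorbs. One subtlety worth flagging is that the induction forces the last entry of the shorter fraction to be an \emph{arbitrary positive real} $y$ rather than an integer; this is exactly why the theorem is stated for $x_i>0$, and it is what allows the collapsing identity to feed back into the hypothesis. (An alternative route peels off $x_0$ via $[x_0;x_1,\dots,x_n]=x_0+1/[x_1;\dots,x_n]$, but that needs the first-argument recurrence, i.e. the symmetry of continuants, as a separate lemma, so I would prefer the self-contained argument above.)
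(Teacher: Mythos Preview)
Your argument is correct: the collapsing identity together with the last-argument recurrence for $K_n$ closes the induction exactly as you describe, and the bookkeeping (including the borderline case $n=1$, where the shortened tuple degenerates to the single entry $y=x_0+1/x_1$) goes through because the continuants are defined as polynomials in formal variables.

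There is no comparison to make with the paper's own proof: the paper \emph{states} this theorem as background in the section on continuants but does not prove it (the first proof that appears in the paper is that of Theorem~\ref{palindrom}). Your write-up therefore supplies an argument the paper omits, and the route you chose---inducting via the innermost collapse so that only the defining recurrence is needed---is the natural self-contained one. Your parenthetical remark is also on point: peeling off $x_0$ instead would implicitly require the first-argument recurrence, which amounts to the symmetry stated (again without proof) as Theorem~\ref{symetrie}.
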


\begin{theorem}\label{symetrie}
For every $n \in \mathbb N$ and $x_1,\dots, x_n \in \mathbb R$, we have
$$K_n(x_1,\dots, x_n)=K_n(x_n,\dots, x_1).$$
\end{theorem}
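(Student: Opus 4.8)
The plan is to realise the continuant as an entry in a product of symmetric $2\times2$ matrices, so that the claimed symmetry becomes a one-line consequence of transposition. For each $i\ge 1$ put
$$M_i=\begin{pmatrix} x_i & 1\\ 1 & 0\end{pmatrix},$$
and set $P_n=M_1M_2\cdots M_n$. My first and main task is to prove by induction on $n$ the matrix identity
$$P_n=\begin{pmatrix} K_n(x_1,\dots,x_n) & K_{n-1}(x_1,\dots,x_{n-1})\\ K_{n-1}(x_2,\dots,x_n) & K_{n-2}(x_2,\dots,x_{n-1})\end{pmatrix}.$$
In particular the top-left entry of $P_n$ is $K_n(x_1,\dots,x_n)$.

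For the base case $n=1$ one checks directly that $P_1=M_1$ matches the right-hand side, reading the empty-argument continuants as $K_0=1$ and $K_{-1}=0$. For the inductive step I would write $P_n=P_{n-1}M_n$, substitute the induction hypothesis for $P_{n-1}$, and multiply out. The $(1,1)$ and $(2,1)$ entries then each have the shape $x_nK_{m-1}+K_{m-2}$, which collapse to $K_n(x_1,\dots,x_n)$ and $K_{n-1}(x_2,\dots,x_n)$ respectively by the defining recurrence of Theorem~\ref{CF_polynomials} (applied to the full string in the first case and to the tail $x_2,\dots,x_n$ in the second); the right column survives unchanged because multiplication by $M_n$ copies the first column of $P_{n-1}$ into the second column of $P_n$.

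With the identity in hand the theorem follows immediately. Each $M_i$ is a symmetric matrix, hence
$$P_n^{\mathsf T}=(M_1\cdots M_n)^{\mathsf T}=M_n^{\mathsf T}\cdots M_1^{\mathsf T}=M_n\cdots M_1.$$
The top-left entry of the right-hand product is, by the very same identity applied to the reversed string, equal to $K_n(x_n,\dots,x_1)$. But transposing a matrix leaves its $(1,1)$ entry unchanged, so the top-left entries of $P_n$ and $P_n^{\mathsf T}$ agree, giving $K_n(x_1,\dots,x_n)=K_n(x_n,\dots,x_1)$.

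The only genuine obstacle is getting the auxiliary matrix identity exactly right: tracking merely the $(1,1)$ entry does not close the induction, because its recurrence draws on the $(1,2)$ entry, and one must recognise that the bottom row of $P_n$ has to be populated by the tail continuants $K_{n-1}(x_2,\dots,x_n)$ and $K_{n-2}(x_2,\dots,x_{n-1})$ for the two applications of the recurrence to line up. A purely computational alternative, avoiding matrices, would be to first establish the \emph{left} recurrence $K_n(x_1,\dots,x_n)=x_1K_{n-1}(x_2,\dots,x_n)+K_{n-2}(x_3,\dots,x_n)$ by induction and then run a second induction comparing it with the given \emph{right} recurrence; this works but is noticeably more laborious than the transposition argument.
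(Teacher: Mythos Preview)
Your proof is correct. The matrix identity $M_1\cdots M_n=\begin{pmatrix}K_n(x_1,\dots,x_n)&K_{n-1}(x_1,\dots,x_{n-1})\\ K_{n-1}(x_2,\dots,x_n)&K_{n-2}(x_2,\dots,x_{n-1})\end{pmatrix}$ checks out by the induction you describe, and the transposition argument then delivers the symmetry cleanly.

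As for comparison: the paper does not actually supply a proof of this theorem. It is stated as a standard fact about continuants (alongside Theorem~\ref{CF_polynomials}) and then invoked later, so there is no ``paper's own proof'' to set yours against. Your matrix-product argument is in any case the standard and arguably the cleanest route; the alternative you mention at the end, proving a left recurrence first and then running a double induction, is indeed more laborious.
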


\begin{corollary}
Let $[a_0,\dots, a_n,\dots]$ be the continued fraction of an irrational number $x$.
Then its $n$-th convergent $\frac{p_n}{q_n}$ satisfies
$$p_n=K_{n+1}(a_0,\dots,a_n), \quad q_n=K_n(x_1,\dots,x_n).$$
\end{corollary}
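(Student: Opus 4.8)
The plan is to read off the claimed ratio directly from Theorem \ref{CF_polynomials} and then upgrade it to an equality of the numerator and denominator separately by showing that the two continuants involved are coprime. By definition the $n$-th convergent is the finite continued fraction $[a_0, a_1, \dots, a_n]$. Since $a_0 \in \mathbb{Z} \subseteq \mathbb{R}$ and $a_i \in \mathbb{N}\setminus\{0\}$ (so $a_i > 0$) for $i \geq 1$, the hypotheses of Theorem \ref{CF_polynomials} hold with $x_i = a_i$, and it yields
$$\frac{p_n}{q_n} = [a_0, \dots, a_n] = \frac{K_{n+1}(a_0, \dots, a_n)}{K_n(a_1, \dots, a_n)}.$$
Write $P_n := K_{n+1}(a_0, \dots, a_n)$ and $Q_n := K_n(a_1, \dots, a_n)$ (this is the denominator intended in the statement, the symbols $x_i$ there being the partial quotients $a_i$). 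These are integers, because each $a_i \in \mathbb{Z}$ and the continuants are integer polynomials, and $Q_n \geq 1 > 0$ since a continuant of positive entries is positive. Hence the only thing left to establish is $\gcd(P_n, Q_n) = 1$: once this holds, the uniqueness of the representation of a rational number as a fraction in lowest terms with positive denominator forces $p_n = P_n$ and $q_n = Q_n$.

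Next I would record second-order recurrences for the two families. Applying the continuant recurrence of Theorem \ref{CF_polynomials} at the last index (taking the final variable to be $a_n$) to $K_{n+1}(a_0,\dots,a_n)$ and to $K_n(a_1,\dots,a_n)$ gives, for $n \geq 1$,
$$P_n = a_n P_{n-1} + P_{n-2}, \qquad Q_n = a_n Q_{n-1} + Q_{n-2},$$
where the starting values $P_{-1} = K_0 = 1$, $P_0 = K_1(a_0) = a_0$ and $Q_{-1} = K_{-1} = 0$, $Q_0 = K_0 = 1$ come straight from $K_{-1}=0$, $K_0=1$. Thus $P_n$ and $Q_n$ satisfy the same linear recurrence $u_n = a_n u_{n-1} + u_{n-2}$, differing only in their initial data.

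The heart of the argument is the determinant identity
$$P_n Q_{n-1} - P_{n-1} Q_n = (-1)^{n-1}, \qquad n \geq 0,$$
which I would prove by induction on $n$. The base case is the direct computation $P_0 Q_{-1} - P_{-1} Q_0 = a_0 \cdot 0 - 1 \cdot 1 = -1 = (-1)^{-1}$. For the inductive step, substituting the two recurrences gives
$$P_n Q_{n-1} - P_{n-1} Q_n = (a_n P_{n-1} + P_{n-2})Q_{n-1} - P_{n-1}(a_n Q_{n-1} + Q_{n-2}) = -\left(P_{n-1}Q_{n-2} - P_{n-2}Q_{n-1}\right),$$
so the quantity simply changes sign at each step and equals $(-1)^{n-1}$. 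From this identity, any common divisor of $P_n$ and $Q_n$ must divide $(-1)^{n-1} = \pm 1$, whence $\gcd(P_n, Q_n) = 1$.

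Combining the three steps finishes the proof: the ratio from Theorem \ref{CF_polynomials}, together with $Q_n > 0$ and the coprimality just established, identifies $P_n$ and $Q_n$ with the reduced numerator and denominator of the convergent, i.e. $p_n = K_{n+1}(a_0,\dots,a_n)$ and $q_n = K_n(a_1,\dots,a_n)$. The one point that demands care is the index bookkeeping when specializing the continuant recurrence to the two families $P_n$ and $Q_n$ (in particular fixing the correct starting values $P_{-1}, Q_{-1}$); once those and the base case of the determinant identity are pinned down, the remaining induction is routine.
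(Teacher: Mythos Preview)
Your proof is correct. The paper itself offers no argument for this corollary at all: it is simply stated immediately after Theorem~\ref{CF_polynomials} and Theorem~\ref{symetrie}, the implicit justification being that Theorem~\ref{CF_polynomials} already gives the convergent as the ratio $K_{n+1}(a_0,\dots,a_n)/K_n(a_1,\dots,a_n)$. What you add beyond the paper is precisely the step needed to turn that equality of ratios into two separate equalities $p_n=K_{n+1}(a_0,\dots,a_n)$ and $q_n=K_n(a_1,\dots,a_n)$: since the paper defines $p_n,q_n$ as the \emph{coprime} numerator and denominator of the convergent, one must check that the two continuants are coprime, and your determinant identity $P_nQ_{n-1}-P_{n-1}Q_n=(-1)^{n-1}$ is the standard and correct way to do this. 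So your write-up is more complete than the paper's, not different in spirit.
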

\section{Continued fractions of quadratic numbers}
We will call a~{\em quadratic irrational} an irrational root $\alpha$ of a~quadratic equation
$$Ax^2+Bx+C=0,$$
where $A,B,C \in \mathbb Z$.
The second root of the equation will be denoted $\alpha'$ and called the {\em (algebraic) conjugate} of $\alpha$.

In order to state the theorem describing continued fractions of quadratic irrationals, we need to recall that a~continued fraction $[a_0,\dots, a_n,\dots]$ is called {\em eventually periodic} if $[a_0,\dots, a_n,\dots]=[a_0,\dots, a_k, \overline{a_{k+1},\dots, a_{\ell}}]$ starts with a~preperiod $a_0,\dots, a_k$ and then a~period $a_{k+1},\dots, a_{\ell}$ is repeated an infinite number of times.
\begin{theorem}[Lagrange]
Let $\alpha \in \mathbb R \setminus \mathbb Q$. The continued fraction of $\alpha$ is eventually periodic if and only if $\alpha$ is a~quadratic irrational.
\end{theorem}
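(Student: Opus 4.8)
The plan is to prove the two implications separately, handling the easier ``eventually periodic $\Rightarrow$ quadratic'' direction first and then the harder converse. For the easy direction I would begin with a \emph{purely} periodic fraction $\beta=[\overline{b_1,\dots,b_m}]$. Its defining self-similarity is that the tail after one full period is again $\beta$, so $\beta=[b_1,\dots,b_m,\beta]$. Feeding $\beta$ into the last slot of Theorem~\ref{CF_polynomials} (equivalently, the convergent recurrence) gives $\beta=(p_{m-1}\beta+p_{m-2})/(q_{m-1}\beta+q_{m-2})$, where the $p_i,q_i$ are the continuants of $b_1,\dots,b_m$; clearing denominators yields an integer quadratic equation in $\beta$, and since $\beta$ is irrational (its expansion is infinite and non-terminating) it is a quadratic irrational. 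For a general $\alpha=[a_0,\dots,a_k,\overline{a_{k+1},\dots,a_\ell}]$ I would write $\alpha=(p_k\beta+p_{k-1})/(q_k\beta+q_{k-1})$ with $\beta$ the purely periodic tail, solve for $\beta$ as a rational function of $\alpha$, and substitute into $\beta$'s quadratic equation; clearing denominators produces an integer quadratic equation for $\alpha$, so $\alpha$ is quadratic irrational.

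For the converse, let $\alpha$ satisfy $A\alpha^2+B\alpha+C=0$ with $A,B,C\in\mathbb Z$, $A\neq 0$, and discriminant $D=B^2-4AC$ positive and not a perfect square. I would track the complete quotients $x_n$ produced by the algorithm. By Theorem~\ref{CF_polynomials} one has $\alpha=(p_{n-1}x_n+p_{n-2})/(q_{n-1}x_n+q_{n-2})$; substituting this into $A\alpha^2+B\alpha+C=0$ and clearing denominators yields an integer quadratic $A_nx_n^2+B_nx_n+C_n=0$. The arithmetic core of the plan is to record three facts about these coefficients: first, $A_n=Ap_{n-1}^2+Bp_{n-1}q_{n-1}+Cq_{n-1}^2$ while $C_n=A_{n-1}$; second, using the determinant identity $p_{n-1}q_{n-2}-p_{n-2}q_{n-1}=\pm 1$, the discriminant is invariant, $B_n^2-4A_nC_n=D$; and third, most importantly, $A_n$ is bounded independently of $n$.

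The hard part is this last boundedness, and it drives the whole proof. I would obtain it from the factorization $A_n=Aq_{n-1}^2\,(p_{n-1}/q_{n-1}-\alpha)(p_{n-1}/q_{n-1}-\alpha')$, which comes from $Ax^2+Bx+C=A(x-\alpha)(x-\alpha')$. The approximation theorem quoted above guarantees that for infinitely many $n$ we have $|\alpha-p_{n-1}/q_{n-1}|<1/(2q_{n-1}^2)<1/q_{n-1}^2$; for such $n$ the first factor contributes at most $1/q_{n-1}^2$, and since $|p_{n-1}/q_{n-1}-\alpha'|\le|p_{n-1}/q_{n-1}-\alpha|+|\alpha-\alpha'|$ stays bounded, I get $|A_n|<|A|\,(1+|\alpha-\alpha'|)$ along this infinite set of indices $S$. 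Then $|C_n|=|A_{n-1}|$ is bounded and $B_n^2=D+4A_nC_n$ forces $|B_n|$ bounded as well, so the integer triples $(A_n,B_n,C_n)$ for $n\in S$ take only finitely many values.

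To finish, I note that since $S$ is infinite and only finitely many triples occur, one triple is repeated by at least three indices $n_1<n_2<n_3$; the common quadratic has at most two roots, so two of $x_{n_1},x_{n_2},x_{n_3}$ coincide, say $x_{n_i}=x_{n_j}$ with $n_i<n_j$. Because the algorithm makes $x_{m+1}$ a function of $x_m$ alone, the equality $x_{n_i}=x_{n_j}$ propagates forward, so $(x_m)$ — and hence $(a_m)$ — is eventually periodic with period dividing $n_j-n_i$, completing the proof. I expect the genuine obstacle to be the boundedness of $A_n$: everything else is bookkeeping with the convergent recurrence, whereas that step is exactly where the quadratic nature of $\alpha$ (through the factorization) interacts with the quality of rational approximation (through the convergent estimate).
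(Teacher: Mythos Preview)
The paper does not actually prove this theorem: it is quoted as a classical result of Lagrange, with the surrounding basic facts attributed to the textbooks~\cite{Kl,MaPe,La}, and the paper then moves directly on to Galois' theorem. So there is no ``paper's own proof'' to compare against; your outline is essentially the standard textbook argument and is on the right track.

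There is, however, one genuine gap in your converse. You bound $|A_n|$ only along the infinite set $S$ of indices supplied by Theorem~2 (the ``either $n$ or $n+1$'' estimate), and then write ``$|C_n|=|A_{n-1}|$ is bounded''. But for $n\in S$ you have no control over $A_{n-1}$ unless $n-1\in S$ as well, and Theorem~2 does not guarantee that $S$ contains two consecutive indices (a~priori $S$ could consist of, say, only even integers). So the finiteness of the triples $(A_n,B_n,C_n)$ for $n\in S$ is not established, and the pigeonhole step does not go through.

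The fix is easy and in fact simplifies the argument: replace the appeal to Theorem~2 by the more elementary estimate $|\alpha-p_m/q_m|<1/(q_mq_{m+1})\le 1/q_m^2$, which holds for \emph{every} $m$ (it follows immediately from the recurrence for convergents and is more basic than Theorem~2). Then your factorization gives $|A_n|\le |A|\,(1+|\alpha-\alpha'|)$ for \emph{all} $n$, hence $|C_n|=|A_{n-1}|$ is bounded for all $n$, and the rest of your argument (invariant discriminant, pigeonhole on triples, three indices forcing two equal complete quotients, propagation of periodicity) goes through unchanged.
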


\begin{theorem}[Galois]\label{Galois}
Let $\alpha$ be a~quadratic irrational and $\alpha'$ its conjugate.
The continued fraction of $\alpha$ is purely periodic if and only if $\alpha>1$ and $\alpha' \in (-1,0)$.
\end{theorem}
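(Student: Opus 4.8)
The plan is to prove both implications by working with the \emph{complete quotients} $x_n$ produced by the algorithm of Definition 1, and to call a quadratic irrational $\beta$ \emph{reduced} when $\beta>1$ and $\beta'\in(-1,0)$. I will repeatedly use that algebraic conjugation is a field automorphism fixing $\mathbb{Q}$, so that from $x_n=a_n+\frac{1}{x_{n+1}}$ one gets $x_n'=a_n+\frac{1}{x_{n+1}'}$, i.e.\ the conjugate complete quotients obey the same recurrence $x_{n+1}'=\frac{1}{x_n'-a_n}$.

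For the direction ``purely periodic $\Rightarrow$ reduced'', suppose $\alpha=[\overline{a_0,\dots,a_n}]$. After one full period the algorithm returns to $\alpha$, so $\alpha=[a_0,\dots,a_n,\alpha]$, and Theorem~\ref{CF_polynomials} together with the continuant recurrence lets me write $\alpha=\frac{p_n\alpha+p_{n-1}}{q_n\alpha+q_{n-1}}$, where $\frac{p_i}{q_i}$ are the convergents. Clearing denominators yields a quadratic $g(x)=q_nx^2+(q_{n-1}-p_n)x-p_{n-1}$ whose two roots are $\alpha$ and $\alpha'$. Periodicity forces $a_0=a_{n+1}\ge 1$, so $\alpha>1$; and a short computation with the standard monotonicity $0<p_{n-1}\le p_n$, $0\le q_{n-1}\le q_n$ of convergents gives $g(0)=-p_{n-1}<0$ and $g(-1)=(q_n-q_{n-1})+(p_n-p_{n-1})>0$. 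Since $g$ opens upward and $g(0)<0$, its roots straddle $0$; the positive one is $\alpha>1$, and the sign change on $(-1,0)$ pins the negative root $\alpha'$ into $(-1,0)$.

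For the converse ``reduced $\Rightarrow$ purely periodic'', I would first show by induction that if $\alpha$ is reduced then every complete quotient $x_n$ is reduced. The inductive step uses $x_{n+1}=\frac{1}{x_n-a_n}$ with $a_n=\lfloor x_n\rfloor\ge 1$: from $0<x_n-a_n<1$ we get $x_{n+1}>1$, while $x_n'\in(-1,0)$ and $a_n\ge 1$ give $x_n'-a_n<-1$, whence $x_{n+1}'=\frac{1}{x_n'-a_n}\in(-1,0)$. The crucial consequence is \emph{invertibility} of the step: rearranging gives $-\frac{1}{x_{n+1}'}=a_n-x_n'\in(a_n,a_n+1)$, so $a_n=\lfloor -\frac{1}{x_{n+1}'}\rfloor$ is recovered from $x_{n+1}'$ alone. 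By Lagrange's periodicity theorem the partial quotients, hence the complete quotients, are eventually periodic, so some value repeats: $x_k=x_m$ with $0\le k<m$. If $k\ge 1$, then $x_k'=x_m'$ forces $a_{k-1}=\lfloor -\frac{1}{x_k'}\rfloor=\lfloor -\frac{1}{x_m'}\rfloor=a_{m-1}$, and therefore $x_{k-1}=a_{k-1}+\frac{1}{x_k}=a_{m-1}+\frac{1}{x_m}=x_{m-1}$.

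Iterating this ``backing up'' decreases $k$ to $0$ while preserving a repetition, yielding $\alpha=x_0=x_{m-k}$ with $m-k\ge 1$, so the expansion is purely periodic. I expect the backing-up step to be the main obstacle, and it is precisely here that the hypothesis $\alpha'\in(-1,0)$, propagated to all $x_n'$, is indispensable: it is what makes the correspondence $x_{n+1}'\mapsto a_n$ single-valued and thereby lets a repetition of complete quotients be pushed all the way back to the very start.
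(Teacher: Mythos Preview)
The paper does not prove this theorem; it is quoted as a classical result of Galois and then used as a black box (in the proof of Theorem~\ref{palindrom} and implicitly via Lemma~\ref{rozvoj_pozpatku}). So there is no in-paper argument to compare against.

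Your proof is the standard one and is correct. In the forward direction the integer quadratic $g(x)=q_nx^2+(q_{n-1}-p_n)x-p_{n-1}$ together with $g(0)<0$ and $g(-1)=(q_n-q_{n-1})+(p_n-p_{n-1})>0$ indeed traps the conjugate root in $(-1,0)$; just note explicitly that $q_n\ge 1$ (so $g$ opens upward) and that the period-length-one case $n=0$ gives $g(-1)=a_0\ge 1$, which your monotonicity phrasing covers but only barely. In the converse, your propagation of the reduced property to all complete quotients, the recovery formula $a_n=\lfloor -1/x_{n+1}'\rfloor$, and the backing-up argument to push a repetition down to index $0$ are exactly the classical route; the appeal to Lagrange's theorem to obtain an initial repetition of complete quotients is legitimate since $x_n=[a_n,a_{n+1},\dots]$ is determined by the tail of partial quotients.
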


\begin{example}
Let $\alpha=\frac{1+\sqrt{5}}{2}$, i.e., the so-called {\em Golden ratio}, then it is the root of $x^2-x-1=0$ and
$\alpha'=\frac{1-\sqrt{5}}{2}\in (-1,0)$. The continued fraction of $\alpha$ is indeed purely periodic since
$$\alpha=1+\frac{-1+\sqrt{5}}{2}=1+\frac{1}{\frac{1+\sqrt{5}}{2}}=1+\frac{1}{\alpha},$$
consequently $\alpha=[\ \overline{1}\ ]$.
\end{example}
In the sequel when we restrict our consideration to square roots of natural numbers, we will make use of the following lemma.
\begin{lemma}\label{rozvoj_pozpatku}
Let $\alpha$ be a~quadratic irrational and $\alpha'$ its conjugate.
If $\alpha$ has a~purely periodic continued fraction $[\overline{a_0,a_1,\dots,a_n}]$,
then $\frac{-1}{\alpha'}=[\overline{a_n,\dots,a_1,a_0}]$.
\end{lemma}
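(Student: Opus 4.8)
The plan is to exploit the purely periodic structure together with the continuant symmetry of Theorem~\ref{symetrie}, reducing the claim to identifying which of two roots of a common quadratic is the desired number.

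First I would record the quadratic satisfied by $\alpha$. Writing $\frac{p_k}{q_k}$ for the convergents of $[\overline{a_0,\dots,a_n}]$ and using pure periodicity to note that the tail following the block $a_0,\dots,a_n$ is again $\alpha$, Theorem~\ref{CF_polynomials} gives $\alpha=\frac{p_n\alpha+p_{n-1}}{q_n\alpha+q_{n-1}}$, hence
$$q_n\alpha^2+(q_{n-1}-p_n)\alpha-p_{n-1}=0.$$
Since $\alpha$ and $\alpha'$ are algebraic conjugates, they are precisely the two roots of $q_nx^2+(q_{n-1}-p_n)x-p_{n-1}=0$.

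Second, I would treat the reversed word. Let $\gamma=[\overline{a_n,\dots,a_0}]$ and denote by $\frac{P_k}{Q_k}$ its convergents. The key computation uses Theorem~\ref{symetrie}: because a continuant is invariant under reversing its arguments, the corollary expressing $p_k,q_k$ as continuants yields $P_n=K_{n+1}(a_n,\dots,a_0)=p_n$, $P_{n-1}=K_n(a_n,\dots,a_1)=q_n$, $Q_n=K_n(a_{n-1},\dots,a_0)=p_{n-1}$, and $Q_{n-1}=K_{n-1}(a_{n-1},\dots,a_1)=q_{n-1}$. Repeating the first step for $\gamma$ then gives
$$p_{n-1}\gamma^2+(q_{n-1}-p_n)\gamma-q_n=0.$$
Next I would connect the two quadratics through the substitution $x=-1/y$: plugging $x=-1/y$ into $q_nx^2+(q_{n-1}-p_n)x-p_{n-1}=0$ and clearing denominators produces exactly $p_{n-1}y^2+(q_{n-1}-p_n)y-q_n=0$. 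Hence $-1/\alpha$ and $-1/\alpha'$ are the two roots of the very quadratic satisfied by $\gamma$ and its conjugate $\gamma'$, so $\{-1/\alpha,\,-1/\alpha'\}=\{\gamma,\gamma'\}$.

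Finally, I would use Galois' Theorem~\ref{Galois} to pick the correct root. Since $\alpha$ is purely periodic, $\alpha>1$ and $\alpha'\in(-1,0)$, whence $-1/\alpha'>1$ while $-1/\alpha\in(-1,0)$; and since $\gamma$ is purely periodic, $\gamma>1$ and $\gamma'\in(-1,0)$. In each pair exactly one element exceeds $1$ and the other lies in $(-1,0)$, so matching the elements larger than $1$ forces $\gamma=-1/\alpha'$, which is the assertion. The main obstacle I anticipate is the second step: the identifications $P_n=p_n$, $P_{n-1}=q_n$, $Q_n=p_{n-1}$, $Q_{n-1}=q_{n-1}$ require careful bookkeeping of which continuant, and with how many arguments, corresponds to each convergent, and it is here that Theorem~\ref{symetrie} is indispensable. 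Once the reversed quadratic is correctly pinned down, the substitution and the sign analysis are routine.
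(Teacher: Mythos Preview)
Your argument is correct and is in fact the classical proof of this result. The paper, however, states Lemma~\ref{rozvoj_pozpatku} without proof, so there is nothing to compare against; your write-up would serve perfectly well as the missing justification. The one place to be slightly more careful in a final version is the invocation of Theorem~\ref{CF_polynomials}: what you actually use is the identity $[a_0,\dots,a_n,x]=\dfrac{p_n x+p_{n-1}}{q_n x+q_{n-1}}$, which follows immediately from the continuant recursion but is not literally the statement of that theorem. Everything else---the continuant identifications $P_n=p_n$, $P_{n-1}=q_n$, $Q_n=p_{n-1}$, $Q_{n-1}=q_{n-1}$ via Theorem~\ref{symetrie}, the substitution $x=-1/y$, and the root selection via Theorem~\ref{Galois}---is handled cleanly.
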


\section{Continued fractions of $\sqrt{N}$}
Let us consider $N \in \mathbb N\setminus \{0\}$ such that $N$ is not a~square.
If $N=k^2$ for some $k \in \mathbb N$, then $\sqrt{N}=k$ and the continued fraction is $\sqrt{N}=[k]$.
For every $N\in \mathbb N\setminus \{0\}$ which is not a~square, there exists a~unique $n\in \mathbb N\setminus \{0\}$ and a~unique $j \in \{1,\dots, 2n\}$ such that $N=n^2+j$.
\begin{theorem}\label{palindrom}
For every $n \in \mathbb N\setminus \{0\}$ and every $j \in \{1,\dots, 2n\}$
the continued fraction of $\sqrt{n^2+j}$ is of the form $[n,\overline{a_1,\dots,a_r, 2n}]$, where $a_1\dots a_r$ is a~palindrome.
\end{theorem}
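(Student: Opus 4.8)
The plan is to apply Galois' theorem (Theorem~\ref{Galois}) not to $\sqrt{N}$ itself --- whose conjugate $-\sqrt N$ lies below $-1$, so that Galois does not apply --- but to the shifted number $\beta=n+\sqrt N$, and then to extract the palindrome by combining a direct computation with the reversal Lemma~\ref{rozvoj_pozpatku}. First I would record the elementary inequalities: since $1\le j\le 2n$ we have $n^2<N<(n+1)^2$, hence $n<\sqrt N<n+1$ and $\lfloor\sqrt N\rfloor=n$. This already fixes $a_0=n$, so that $\sqrt N=[n,a_1,a_2,\dots]$ with every $a_i\ge 1$.

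Next I would set $\beta=n+\sqrt N$ and check the Galois hypotheses. From $n<\sqrt N<n+1$ we get $\beta>1$, while the conjugate $\beta'=n-\sqrt N$ satisfies $-1<\beta'<0$. Hence by Theorem~\ref{Galois} the continued fraction of $\beta$ is purely periodic. Because $2n<\beta<2n+1$, its first partial quotient is $2n$, and the remainder $\beta-2n=\sqrt N-n$ is exactly the first complete quotient of $\sqrt N$ produced after the digit $a_0=n$. Thus $\beta$ and $\sqrt N$ share all complete quotients from the first one onward, so $\beta=[\overline{2n,a_1,\dots,a_r}]$ for some $r$, and peeling off the leading $2n$ gives $\sqrt N=[n,\overline{a_1,\dots,a_r,2n}]$. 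This already yields the claimed shape, with the trailing $2n$ forced by the way the period of $\beta$ wraps around.

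It remains to prove that $a_1\dots a_r$ is a palindrome, and here I would evaluate $-\tfrac1{\beta'}=\tfrac1{\sqrt N-n}$ in two ways. On the one hand, reading off the purely periodic tail of $\sqrt N$ gives $\tfrac1{\sqrt N-n}=[\overline{a_1,\dots,a_r,2n}]$. On the other hand, applying Lemma~\ref{rozvoj_pozpatku} to $\beta=[\overline{2n,a_1,\dots,a_r}]$ reverses the period and yields $-\tfrac1{\beta'}=[\overline{a_r,\dots,a_1,2n}]$. Both are the canonical continued fraction (all entries being positive integers) of one and the same irrational number, so by uniqueness of the expansion the two periodic sequences must agree termwise; comparing positions $1$ through $r$ gives $a_i=a_{r+1-i}$, which is precisely the palindrome property.

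The step I expect to demand the most care is the bookkeeping in the middle paragraph: identifying the complete quotient $\tfrac1{\sqrt N-n}$ with the purely periodic tail $[\overline{a_1,\dots,a_r,2n}]$, i.e. verifying that the $2n$ closing the period is exactly the digit generated when the period of $\beta$ recurs, and then justifying the termwise comparison even when the minimal period could be shorter than $r+1$. I would handle both points by arguing throughout at the level of complete quotients and the unique infinite sequence of partial quotients, rather than by manipulating the nested fractions directly.
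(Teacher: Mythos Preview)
Your proposal is correct and follows essentially the same route as the paper: apply Galois' theorem to $\beta=n+\sqrt{N}$ (the paper's $\alpha$) to obtain the purely periodic expansion $[\overline{2n,a_1,\dots,a_r}]$, deduce $\sqrt{N}=[n,\overline{a_1,\dots,a_r,2n}]$, and then use Lemma~\ref{rozvoj_pozpatku} on $-1/\beta'$ together with uniqueness of the expansion to force the palindrome. The only cosmetic difference is that the paper compares the two expansions at the level of $\sqrt{N}$ itself, writing $\sqrt{N}=n+\tfrac{1}{-1/\alpha'}=[n,\overline{a_r,\dots,a_1,2n}]$, whereas you compare them one level down at $1/(\sqrt{N}-n)$; your extra remarks about minimal periods and complete quotients are sound refinements of bookkeeping that the paper leaves implicit.
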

\begin{proof}
This proof can be found in~\cite{La} page 15. However we repeat it here since it follows almost immediately from the previous statements and it gives an insight into the form of continued fractions of quadratic numbers.
Denote $\alpha=n+\sqrt{n^2+j}$. Then $\alpha$ is a~quadratic irrational greater than $1$ and $\alpha'=n-\sqrt{n^2+j}\in (-1,0)$.
Therefore $\alpha$ has by Theorem~\ref{Galois} a~purely periodic continued fraction, i.e., there exist $a_1, \dots, a_r \in \mathbb N$ such that $\alpha=[\overline{2n,a_1,\dots, a_r}]$. It is thus evident that $\sqrt{n^2+j}=[n,\overline{a_1,\dots, a_r,2n}]$.
It remains to prove that $a_1\dots a_r$ is a~palindrome. According to Lemma~\ref{rozvoj_pozpatku} the number $\frac{-1}{\alpha'}$ has its continued fraction equal to $[\overline{a_r,\dots, a_1, 2n}]$.
We obtain thus
$$\sqrt{n^2+j}=n+\cfrac{1}{\cfrac{-1}{n-\sqrt{n^2+j}}}=n+\cfrac{1}{\cfrac{-1}{\alpha'}}=[n,\overline{a_r,\dots, a_1, 2n}].$$
Since the continued fraction of irrational numbers is unique and we have
$$\sqrt{n^2+j}=[n,\overline{a_1,\dots, a_r,2n}]=[n,\overline{a_r,\dots, a_1, 2n}],$$
it follows that $a_1=a_r, \ a_2=a_{r-1}$ etc. Consequently, $a_1\dots a_r$ is a~palindrome.
\end{proof}
\begin{theorem}
The continued fraction of the form $[n,\overline{a_1,\dots,a_r, 2n}]$, where $a_1\dots a_r$ is a~palindrome, corresponds to $\sqrt{N}$ for a~rational number $N$.
\end{theorem}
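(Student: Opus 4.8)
The plan is to show that the real number $\beta$ defined by the continued fraction $[n,\overline{a_1,\dots,a_r,2n}]$ satisfies $\beta^2\in\mathbb{Q}$. Since $\beta$ is eventually periodic, it is a quadratic irrational by Lagrange's theorem, so it has an algebraic conjugate $\beta'$; the goal reduces to proving $\beta'=-\beta$, because this forces the minimal polynomial of $\beta$ to have vanishing linear term, and then $\beta=\sqrt{N}$ with $N=\beta^2$ rational (using $\beta>0$).

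The first step is to introduce the auxiliary number $\alpha=n+\beta$. Reading off its continued fraction, $\alpha=2n+\cfrac{1}{a_1+\cdots}$, where the tail is exactly $\beta-n$, so $\alpha=[\overline{2n,a_1,\dots,a_r}]$ is purely periodic by inspection (and Theorem~\ref{Galois} then guarantees $\alpha>1$ and $\alpha'\in(-1,0)$). I would then apply Lemma~\ref{rozvoj_pozpatku} to $\alpha$, which gives $\frac{-1}{\alpha'}=[\overline{a_r,\dots,a_1,2n}]$. Here the palindrome hypothesis does the real work: because $a_1\dots a_r=a_r\dots a_1$, the reversed period coincides with $a_1,\dots,a_r,2n$, hence $\frac{-1}{\alpha'}=[\overline{a_1,\dots,a_r,2n}]$.

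The final step is to notice that this last continued fraction is precisely the tail appearing in $\beta$: writing $\gamma=[\overline{a_1,\dots,a_r,2n}]$ we have $\beta=n+\frac{1}{\gamma}$, while the previous step gives $\gamma=\frac{-1}{\alpha'}$, so $\frac{1}{\gamma}=-\alpha'$. Combining this with the identity $\alpha'=n+\beta'$ (conjugation fixes the rational $n$ and is additive) yields
$$\beta=n+\frac{1}{\gamma}=n-\alpha'=n-(n+\beta')=-\beta'.$$
Thus $\beta+\beta'=0$ and $N=\beta^2=-\beta\beta'$ is rational, completing the argument. I expect the main obstacle to be purely a matter of careful bookkeeping: correctly identifying $\alpha=n+\beta$ as purely periodic, tracking the order reversal produced by Lemma~\ref{rozvoj_pozpatku}, and invoking the palindrome at exactly the point where it collapses the reversed period back onto the original one. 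Once these alignments are in place, the conjugation identity $\alpha'=n+\beta'$ makes the conclusion immediate.
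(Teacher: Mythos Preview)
Your argument is correct and rather elegant, but it is \emph{not} the route the paper takes. The paper proceeds by direct computation with continuants: writing $x=[n,\overline{a_1,\dots,a_r,2n}]$, it expresses $x-n$ via Theorem~\ref{CF_polynomials} as a ratio of continuants evaluated at $x+n$, expands using the recurrence, and then invokes Theorem~\ref{symetrie} together with the palindrome hypothesis to obtain $K_{r-1}(a_1,\dots,a_{r-1})=K_{r-1}(a_2,\dots,a_r)$. This equality kills the linear term in $x$ and produces the explicit formula
\[
x=\sqrt{\,n^2+\frac{2nK_{r-1}(a_1,\dots,a_{r-1})+K_{r-2}(a_2,\dots,a_{r-1})}{K_r(a_1,\dots,a_r)}\,}.
\]
Your approach instead runs the proof of Theorem~\ref{palindrom} in reverse: you use Galois' theorem and Lemma~\ref{rozvoj_pozpatku} on $\alpha=n+\beta$, letting the palindrome collapse the reversed period so that $\beta=-\beta'$ drops out of the conjugation identity. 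This is shorter and more conceptual, and it makes the duality between Theorem~\ref{palindrom} and the present statement transparent; what you give up is the explicit rational expression for $N$ in terms of continuants, which the paper's computation delivers for free.
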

\begin{proof}
Denote by $x$ the number whose continued fraction equals $[n,\overline{a_1,\dots,a_r, 2n}]$, i.e.,
$$x=n+\cfrac{1}{a_1+\cfrac{1}{\ddots+\cfrac{1}{a_r+\cfrac{1}{2n+(x-n)}}}}\ .$$
Hence by Theorem~\ref{CF_polynomials}, $$\begin{array}{rcl}x-n&=&\frac{K_{r}(a_2,\dots, a_r,x+n)}{K_{r+1}(a_1,\dots,a_r,x+n)}, \\
&=&\frac{K_{r-2}(a_2,\dots, a_{r-1})+(x+n)K_{r-1}(a_2,\dots, a_r)}{K_{r-1}(a_1,\dots, a_{r-1})+(x+n)K_{r}(a_1,\dots, a_r)}.\end{array}$$
By Theorem~\ref{symetrie} and since $a_1\dots a_r$ is a~palindrome, we have $K_{r-1}(a_1,\dots, a_{r-1})=K_{r-1}(a_2,\dots,a_r)$.
Consequently, we obtain
$$x=\sqrt{n^2+\frac{2nK_{r-1}(a_1,\dots, a_{r-1})+K_{r-2}(a_2,\dots,a_{r-1})}{K_{r}(a_1,\dots, a_r)}},$$
where under the square root, there is certainly a~rational number since by their definition, continuants with integer variables are integers.
\end{proof}
In the sequel, let us study the length of the period and the form of the continued fraction of $\sqrt{N}=\sqrt{n^2+j}$ in dependence on $n$ and $j$, where $n \in \mathbb N\setminus \{0\}$ and $j \in \{1,\dots, 2n\}$. We will prove only some of observations since the proofs are quite technical and space-demanding.
The rest of proofs may be found in~\cite{SOC}. In Table~\ref{tabulka_hotovo}, we have highlighted all classes of $n$ and $j$ for which their continued fractions of $\sqrt{N}=\sqrt{n^2+j}$ have been described.

\begin{observation}
The continued fraction of $\sqrt{N}$ has period of length $1$ if and only if $N=n^2+1$. It holds then
$\sqrt{N}=[n,\overline{2n}]$.
\end{observation}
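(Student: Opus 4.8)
The plan is to read off the claim directly from the palindrome structure established in Theorem~\ref{palindrom} and then pin down the single remaining parameter by a short algebraic computation.

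First I would invoke Theorem~\ref{palindrom}: for $N=n^2+j$ the continued fraction is $[n,\overline{a_1,\dots,a_r,2n}]$, so its period is the block $a_1,\dots,a_r,2n$, whose length is $r+1$. Hence the period has length $1$ if and only if $r=0$, i.e. the palindrome $a_1\dots a_r$ is empty and the expansion collapses to $[n,\overline{2n}]$. This already reduces the entire statement to identifying which $N$ produces this particular expansion.

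Next I would evaluate $[n,\overline{2n}]$ explicitly. Writing $x=[n,\overline{2n}]$ and letting $y=[\overline{2n}]$ denote the purely periodic tail, the self-similarity of $y$ gives $y=2n+\tfrac{1}{y}$, so $y$ solves $y^2-2ny-1=0$ and (taking the root with $y>1$) we get $y=n+\sqrt{n^2+1}$. Then $x=n+\tfrac{1}{y}$, and rationalizing $\tfrac{1}{\,n+\sqrt{n^2+1}\,}=\sqrt{n^2+1}-n$ yields $x=\sqrt{n^2+1}$. Thus $[n,\overline{2n}]$ is precisely the expansion of $\sqrt{n^2+1}$, i.e. of $\sqrt{N}$ with $j=1$.

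Finally, both implications follow from the uniqueness of the continued fraction of an irrational number. If the period has length $1$, the reduction above forces the expansion to be $[n,\overline{2n}]$, whence $N=n^2+1$ by the computation just performed; conversely, if $N=n^2+1$ the same computation shows $\sqrt{N}=[n,\overline{2n}]$, which manifestly has period length $1$. I do not anticipate any real obstacle here: the only point needing a little care is that the tail $y$ is well defined and that one selects the positive root of the quadratic, which is automatic since $y>1$.
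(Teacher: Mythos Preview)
Your proof is correct and follows essentially the same approach as the paper: both invoke Theorem~\ref{palindrom} to force the expansion to be $[n,\overline{2n}]$ when the period has length $1$, and then perform a short algebraic computation to identify this expansion with $\sqrt{n^2+1}$. The only cosmetic difference is that the paper treats the two implications separately (starting once from $\sqrt{n^2+1}$ and once from $\sqrt{n^2+j}$), whereas you handle both at once by evaluating the periodic tail $y=[\overline{2n}]$ and appealing to uniqueness.
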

\begin{proof}
This observation has been done already in~\cite{Si}.\\
\noindent $(\Leftarrow):$
$$\begin{array}{l}\sqrt{n^2+1}=n+\cfrac{\sqrt{n^2+1}-n}{1}=\\
=n+\cfrac{1}{\sqrt{n^2+1}+n}=n+\cfrac{1}{2n+\cfrac{\sqrt{n^2+1}-n}{1}},
\end{array}$$
hence $\sqrt{N}=[n,\overline{2n}]$.\\
\noindent $(\Rightarrow):$
If the length of the period equals $1$, then by Theorem~\ref{palindrom} we have $\sqrt{N}=[n,\overline{2n}]$.
 $$ \sqrt{n^2+j}=n+(\sqrt{n^2+j}-n)=n+\cfrac{1}{2n+\sqrt{n^2+j}-n},$$
 hence we have
 $$\begin{array}{rcl}\sqrt{n^2+j}-n&=&\cfrac{1}{2n+\sqrt{n^2+j}-n},\\
 \cfrac{j}{\sqrt{n^2+j}+n}&=&\cfrac{1}{\sqrt{n^2+j}+n},\\
 j&=&1.
 \end{array}$$
\end{proof}
\begin{observation}\label{per2}
The continued fraction of $\sqrt{N}$ has period of length $2$ if and only if $\frac{2n}{j}$ is an integer. It holds then
$\sqrt{N}=[n,\overline{\frac{2n}{j},2n}]$.
\end{observation}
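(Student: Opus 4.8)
The plan is to proceed exactly as in the previous observation, using Theorem~\ref{palindrom} to fix the shape of the expansion. A period of length $2$ means that in the palindromic normal form $[n,\overline{a_1,\dots,a_r,2n}]$ we have $r=1$, so $\sqrt{n^2+j}=[n,\overline{a_1,2n}]$ for a single positive integer $a_1$ (the palindrome condition being vacuous for one letter). The whole statement then amounts to showing $a_1=\frac{2n}{j}$. Throughout I assume $j\ge 2$, since the case $j=1$ giving period $1$ is precisely the content of the previous observation; I will return to why this restriction matters.

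For the direction ($\Rightarrow$), write $\beta=\sqrt{n^2+j}$ and use that the tail $2n+(\beta-n)=\beta+n$ regenerates the purely periodic part. This gives the self-referential identity
$$\beta-n=\cfrac{1}{a_1+\cfrac{1}{\beta+n}}=\frac{\beta+n}{a_1(\beta+n)+1}.$$
Cross-multiplying and cancelling, the linear-in-$\beta$ terms drop out and one is left with $a_1(\beta^2-n^2)=2n$. Since $\beta^2-n^2=j$, this reads $a_1 j=2n$, so $\frac{2n}{j}=a_1\in\mathbb N\setminus\{0\}$, which is what we wanted.

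For the direction ($\Leftarrow$), assume $a_1:=\frac{2n}{j}$ is a positive integer and build the expansion by hand as in the period-$1$ proof. Rationalizing, $\beta-n=\frac{j}{\beta+n}$, and splitting $\frac{\beta+n}{j}=\frac{2n}{j}+\frac{1}{\beta+n}$ yields
$$\sqrt{n^2+j}=n+\cfrac{1}{\frac{2n}{j}+\cfrac{1}{2n+(\sqrt{n^2+j}-n)}}.$$
Because the innermost term $2n+(\beta-n)=\beta+n$ reproduces the quantity $\frac{1}{\beta-n}$ met one step earlier, the expansion is purely periodic from $a_1$ onwards with block $\left(\frac{2n}{j},2n\right)$, i.e.\ $\sqrt{N}=[n,\overline{\frac{2n}{j},2n}]$.

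The main point requiring care --- my expected chief obstacle, though a mild one --- is confirming that this really is the canonical expansion of period exactly $2$, rather than something that silently collapses. Two checks are needed. First, that the integer parts come out right: from $j\le 2n$ one gets $n<\sqrt{n^2+j}<n+1$, so $\lfloor\beta\rfloor=n$ and $\lfloor\beta+n\rfloor=2n$, while $0<\frac{1}{\beta+n}<1$ forces $\lfloor\frac{\beta+n}{j}\rfloor=\frac{2n}{j}$; hence the algorithm indeed outputs $n,\frac{2n}{j},2n,\dots$. Second, that the period does not shorten to length $1$: this is exactly where $j\ge 2$ enters, since then $a_1=\frac{2n}{j}\le n<2n$, so the block $\left(\frac{2n}{j},2n\right)$ cannot degenerate into a repetition of $2n$. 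The boundary value $j=1$, for which $\frac{2n}{j}=2n$ and the period truly collapses to $[n,\overline{2n}]$, is thus correctly attributed to the previous observation.
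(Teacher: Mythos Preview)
Your proof is correct and follows essentially the same route as the paper: in both directions you set up the self-referential identity coming from the periodic tail and extract $a_1 j=2n$ (for $\Rightarrow$) or verify the nested-fraction identity directly (for $\Leftarrow$). Your argument is in fact slightly more careful than the paper's, since you explicitly check that the floors come out as claimed and that the period does not collapse to length~$1$ when $j=1$; the paper leaves both points implicit.
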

\begin{proof}
\noindent $(\Leftarrow):$
$$\begin{array}{rcl}\sqrt{n^2+j}&=&n+(\sqrt{n^2+j}-n)\\
&=&n+\cfrac{j}{\sqrt{n^2+j}+n}\\
&=&n+\cfrac{1}
{\cfrac{2n}{j}+\cfrac{\sqrt{n^2+j}-n}{j}},\\
&=&n+\cfrac{1}
{\cfrac{2n}{j}+\cfrac{1}{\sqrt{n^2+j}+n}},\\
&=&n+\cfrac{1}
{\cfrac{2n}{j}+\cfrac{1}{2n+(\sqrt{n^2+j}-n)}},
\end{array}$$
thus $\sqrt{N}=[n,\overline{\frac{2n}{j},2n}]$.\\
\noindent $(\Rightarrow):$ If the length of the period equals $2$, then by Theorem~\ref{palindrom} we have $\sqrt{N}=[n,\overline{x,2n}]$.
$$\begin{array}{rcl}\sqrt{n^2+j}&=&n+(\sqrt{n^2+j}-n)\\
&=&n+\cfrac{1}{x+\cfrac{1}{2n+(\sqrt{n^2+j}-n)}},\end{array}$$
hence we have
$$\begin{array}{rcl}
\sqrt{n^2+j}-n&=&\cfrac{1}{\cfrac{x(\sqrt{n^2+j}+n)+1}{\sqrt{n^2+j}+n}},\\
x&=&\frac{2n}{j}.
\end{array}$$
\end{proof}
\begin{observation}\label{per3}
If the continued fraction of $\sqrt{N}$ has period of length $3$, then $j$ is an odd number and $\sqrt{N}=[n,\overline{x,x,2n}]$, where $x$ is an even number.
\end{observation}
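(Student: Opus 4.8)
The plan is to follow the pattern of the period-$1$ and period-$2$ cases (the preceding observations): encode the periodicity as a single algebraic identity for $\sqrt{n^2+j}$ and then extract the arithmetic constraints from it. First I would apply Theorem~\ref{palindrom}. A period of length $3$ means the repeating block is $a_1,a_2,2n$ with $a_1a_2$ a palindrome, which forces $a_1=a_2$. Writing $x$ for this common value gives $\sqrt{n^2+j}=[n,\overline{x,x,2n}]$, which is already half of the assertion; it remains to prove that $x$ is even and $j$ is odd.

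Next I would set $\beta=\sqrt{n^2+j}$ and use the pure periodicity of the tail $[\overline{x,x,2n}]$ to obtain the self-referential identity
$$\beta=n+\cfrac{1}{x+\cfrac{1}{x+\cfrac{1}{2n+(\beta-n)}}},$$
where the term $\beta-n$ at the bottom records that reading one full period returns the complete quotient to its starting value. The simplification hinges on $2n+(\beta-n)=\beta+n$ together with $(\beta-n)(\beta+n)=\beta^2-n^2=j$, so that $\beta-n=\tfrac{j}{\beta+n}$. Collapsing the three-level fraction from the inside out and clearing denominators, I expect the single relation
$$j(x^2+1)=2nx+1$$
to emerge, equivalently $jx^2-2nx+(j-1)=0$. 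As a built-in consistency check, separating the rational and the $\beta$-proportional parts of the cross-multiplied equation should produce this same relation twice.

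With that relation in hand the conclusions are immediate. The right-hand side $2nx+1$ is odd, so the product $j(x^2+1)$ is odd; a product of integers is odd only when both factors are odd, whence $j$ is odd and $x^2+1$ is odd. The latter forces $x^2$, and hence $x$, to be even, which finishes the argument. I do not anticipate a genuine obstacle here: the only delicate point is the bookkeeping in collapsing the nested continued fraction to reach $j(x^2+1)=2nx+1$, and once that identity is established the parity argument for both $j$ and $x$ is essentially a one-liner.
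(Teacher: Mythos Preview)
Your proposal is correct and follows essentially the same route as the paper: invoke Theorem~\ref{palindrom} to force the block $x,x$, set up the self-referential continued-fraction identity, and extract $j(x^2+1)=2nx+1$. The paper states the same relation as $j=\tfrac{2xn+1}{x^2+1}$ and then simply asserts that integrality of $j$ and $x$ implies the parity claims, so your explicit parity argument actually fills in a step the paper leaves to the reader.
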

\begin{proof}
If the length of the period equals $3$, then by Theorem~\ref{palindrom} we have $\sqrt{N}=[n,\overline{x,x,2n}]$.
$$\sqrt{n^2+j}=n+\cfrac{1}{x+\cfrac{1}{x+\cfrac{1}{2n+(\sqrt{n^2+j}-n)}}},$$
hence we get
$j=\frac{2xn+1}{x^2+1}$.
The condition that $j$ and $x$ must be integers implies the statement.
\end{proof}

\begin{observation}
Let $j=4$. If $n$ is even, then the length of the period is $2$ and $\sqrt{N}=[n,\overline{\frac{2n}{j},2n}]$.
If $n$ is odd, then the length of the period is $5$ and $\sqrt{N}=[n, \overline{\frac{n-1}{2},1,1,\frac{n-1}{2},2n}]$.
\end{observation}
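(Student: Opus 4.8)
The plan is to split on the parity of $n$, noting first that the constraint $j\in\{1,\dots,2n\}$ with $j=4$ forces $2n\geq 4$, so $n\geq 2$; in particular the odd case means $n\geq 3$. The even case I would dispose of immediately by appealing to Observation~\ref{per2}: with $j=4$ the quantity $\frac{2n}{j}=\frac{n}{2}$ is an integer precisely when $n$ is even, so Observation~\ref{per2} delivers at once both that the period has length $2$ and that $\sqrt{N}=[n,\overline{\frac{2n}{j},2n}]$.

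For the odd case I would run the continued fraction algorithm directly on $x_0=\sqrt{n^2+4}$. Since $n^2<n^2+4<(n+1)^2$ for $n\geq 2$, we have $a_0=n$. I would then compute the complete quotients $x_k$ one at a time, rationalizing each denominator with its conjugate exactly as in the $(\Leftarrow)$ computations above, and I claim they come out to $x_1=\frac{\sqrt{n^2+4}+n}{4}$, $x_2=\frac{\sqrt{n^2+4}+n-2}{n}$, $x_3=\frac{\sqrt{n^2+4}+2}{n}$, $x_4=\frac{\sqrt{n^2+4}+n-2}{4}$, and $x_5=\sqrt{n^2+4}+n$, with integer parts $a_1=a_4=\frac{n-1}{2}$, $a_2=a_3=1$, and $a_5=2n$. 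Each floor value is justified by a pair of inequalities that, after squaring, reduce to trivial polynomial facts in $n$; the only delicate one is $a_3=1$, whose upper bound $\sqrt{n^2+4}<2n-2$ is equivalent to $n(3n-8)>0$ and so genuinely requires $n\geq 3$. This is precisely the point at which the hypothesis that $n$ is odd (together with $j\leq 2n$) is used.

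Finally I would observe that $x_6=\frac{1}{x_5-2n}=\frac{1}{\sqrt{n^2+4}-n}=\frac{\sqrt{n^2+4}+n}{4}=x_1$, so the algorithm returns to its first complete quotient after exactly five steps. This yields $\sqrt{n^2+4}=[n,\overline{\frac{n-1}{2},1,1,\frac{n-1}{2},2n}]$, whose periodic block is indeed a palindrome followed by $2n$, consistent with Theorem~\ref{palindrom}. Because $x_1,\dots,x_5$ are pairwise distinct (they have distinct denominators and offsets), step $6$ is the first return, so $5$ is the minimal period; alternatively, minimality is immediate since $5$ is prime while the period cannot be $1$, as period $1$ occurs only for $j=1$ by the first observation.

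The main obstacle here is not conceptual but purely the bookkeeping: keeping the five rationalizations straight and verifying all the floor inequalities uniformly in $n$, with the tight boundary case $n=3$ (where $a_3=1$ is only barely satisfied) demanding the most care.
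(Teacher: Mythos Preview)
Your proposal is correct and follows essentially the same approach as the paper: the even case is dispatched via Observation~\ref{per2}, and the odd case is handled by running the continued fraction algorithm step by step until the first complete quotient recurs. Your write-up is in fact more careful than the paper's, which presents the same chain of rationalizations as a tower of nested fractions but neither checks the floor inequalities explicitly nor addresses why $5$ is the \emph{minimal} period; your observations that the $a_3=1$ bound needs $n\geq 3$ and that primality of $5$ (or pairwise distinctness of the $x_k$) forces minimality are genuine additions.
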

\begin{proof}
If $n$ is even, $\frac{2n}{j}$ is an integer and the statement is a~corollary of Observation~\ref{per2}. If $n$ is odd, it holds
$$\sqrt{n^2+4}=$$
$$\begin{array}{c}
=n+(\sqrt{n^2+4}-n)\\
=n+\cfrac{1}{\cfrac{\sqrt{n^2+4}+n}{4}}\\
=n+\cfrac{1}{\cfrac{2n-2}{4}+\cfrac{\sqrt{n^2+4}-(n-2)}{4}}\\
=n+\cfrac{1}{\cfrac{n-1}{2}+\cfrac{1}{\cfrac{4(\sqrt{n^2+4}+(n-2))}{4n}}}\\
=n+\cfrac{1}{\cfrac{n-1}{2}+\cfrac{1}{1+\cfrac{\sqrt{n^2+4}-2}{n}}}\\
=n+\cfrac{1}{\cfrac{n-1}{2}+\cfrac{1}{1+\cfrac{1}{\cfrac{\sqrt{n^2+4}+2}{n}}}}\\
=n+\cfrac{1}{\cfrac{n-1}{2}+\cfrac{1}{1+\cfrac{1}{1+\cfrac{\sqrt{n^2+4}-(n-2)}{n}}}}\\
=n+\cfrac{1}{\cfrac{n-1}{2}+\cfrac{1}{1+\cfrac{1}{1+\cfrac{1}{\cfrac{n(\sqrt{n^2+4}+n-2)}{4n}}}}}\\
=n+\cfrac{1}{\cfrac{n-1}{2}+\cfrac{1}{1+\cfrac{1}{1+\cfrac{1}{\cfrac{2n-2}{4}+\cfrac{\sqrt{n^2+4}-n}{4}}}}}\\
=n+\cfrac{1}{\cfrac{n-1}{2}+\cfrac{1}{1+\cfrac{1}{1+\cfrac{1}{\cfrac{n-1}{2}+\cfrac{1}{2n+(\sqrt{n^2+4}-n)}}}}},
\end{array}$$
thus $\sqrt{N}=[n, \overline{\frac{n-1}{2},1,1,\frac{n-1}{2},2n}]$.
\end{proof}

\begin{observation}
For $n> 1$ and $j=2n-1$ the length of the period is $4$ and the continued fraction is then $\sqrt{N}=[n,\overline{1,n-1,1,2n}]$.
\end{observation}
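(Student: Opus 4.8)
The plan is to verify the claimed expansion by running the continued fraction algorithm directly on $\sqrt N=\sqrt{n^2+2n-1}$, in the same style as the preceding observations. First I would fix the integer part: for $n>1$ we have $n^2<n^2+2n-1<(n+1)^2$, so $\lfloor\sqrt N\rfloor=n$, whence $a_0=n$ and the first complete quotient is
\[
x_1=\frac{1}{\sqrt N-n}=\frac{\sqrt N+n}{N-n^2}=\frac{\sqrt N+n}{2n-1}.
\]

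Next I would iterate the elementary move ``subtract the integer part, invert, and rationalize by multiplying through by the algebraic conjugate,'' repeatedly using identities of the form $N-m^2$ to keep the denominators small. I expect to obtain the chain
\[
a_1=1,\quad x_2=\frac{\sqrt N+n-1}{2};\qquad a_2=n-1,\quad x_3=\frac{\sqrt N+n-1}{2n-1};
\]
\[
a_3=1,\quad x_4=\sqrt N+n;\qquad a_4=2n,
\]
after which $x_5=\frac{1}{(\sqrt N+n)-2n}=\frac{1}{\sqrt N-n}=x_1$, so the complete quotient repeats and the expansion is $\sqrt N=[n,\overline{1,n-1,1,2n}]$. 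This is consistent with Theorem~\ref{palindrom}, since $1,n-1,1$ is a palindrome.

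The only place real care is needed is in justifying the four floor evaluations, which is also where the hypothesis $n>1$ enters. Using nothing more than $n<\sqrt N<n+1$ (and, for the first and third steps, $\sqrt N+n<2n+1\le 2(2n-1)$ together with $\sqrt N>n$), each ratio is pinned into the correct unit interval: $x_1,x_3\in(1,2)$ give $a_1=a_3=1$; the bounds $n-1\le\sqrt N<n+1$ give $x_2\in[n-1,n)$, hence $a_2=n-1$; and $\lfloor\sqrt N\rfloor=n$ gives $a_4=2n$. Note that $a_2=n-1$ is a legitimate positive partial quotient precisely because $n>1$; for $n=1$ one would have $j=1$ and the period would collapse to length $1$.

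Finally I would confirm that the period is exactly $4$ rather than a proper divisor. This follows at once from the direct computation, since the four complete quotients $x_1,x_2,x_3,x_4$ are pairwise distinct while $x_5=x_1$; alternatively, the length is neither $1$ (as $j=2n-1\neq1$) nor $2$ (as $\frac{2n}{2n-1}\notin\mathbb Z$ for $n>1$, cf.\ Observation~\ref{per2}), and the block $1,n-1,1,2n$ is not the repetition of a shorter word. The main obstacle is thus purely bookkeeping: carrying out the conjugate rationalizations without arithmetic slips and verifying the floor estimates, rather than any conceptual difficulty.
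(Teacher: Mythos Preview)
Your proposal is correct and follows essentially the same approach as the paper: both run the continued fraction algorithm directly on $\sqrt{n^2+2n-1}$, obtaining the identical sequence of complete quotients $x_1=\frac{\sqrt N+n}{2n-1}$, $x_2=\frac{\sqrt N+n-1}{2}$, $x_3=\frac{\sqrt N+n-1}{2n-1}$, $x_4=\sqrt N+n$ and then closing the loop. You add explicit justification of the floor evaluations and of the period being exactly $4$, which the paper leaves implicit, but the underlying argument is the same.
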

\begin{proof}
$$\sqrt{n^2+2n-1}=$$
$$\begin{array}{c}
=n+(\sqrt{n^2+2n-1}-n)\\
=n+\cfrac{1}{\cfrac{\sqrt{n^2+2n-1}+n}{2n-1}}\\
=n+\cfrac{1}{\cfrac{2n-1}{2n-1}+\cfrac{\sqrt{n^2+2n-1}-(n-1)}{2n-1}}\\
=n+\cfrac{1}{1+\cfrac{1}{\cfrac{\sqrt{n^2+2n-1}+(n-1)}{2}}}\\
=n+\cfrac{1}{1+\cfrac{1}{\cfrac{2n-2}{2}+\cfrac{\sqrt{n^2+2n-1}-(n-1)}{2}}}\\
=n+\cfrac{1}{1+\cfrac{1}{n-1+\cfrac{1}{\cfrac{\sqrt{n^2+2n-1}+(n-1)}{2n-1}}}}\\
=n+\cfrac{1}{1+\cfrac{1}{n-1+\cfrac{1}{\cfrac{2n-1}{2n-1}+\cfrac{\sqrt{n^2+2n-1}-n}{2n-1}}}}\\
=n+\cfrac{1}{1+\cfrac{1}{n-1+\cfrac{1}{1+\cfrac{1}{2n+(\sqrt{n^2+2n-1}-n)}}}},\end{array}$$
hence $\sqrt{N}=[n,\overline{1,n-1,1,2n}]$
\end{proof}

\begin{observation} For $n>3$ and $j=2n-3$, either the length of the period is $4$ if $n$ is odd and the continued fraction is then $\sqrt{N}=[n,\overline{1,\frac{n-3}{2},1,2n}]$, or the length of the period is $6$ if $n$ is even and the continued fraction is then $\sqrt{N}=[n,\overline{1,\frac{n}{2}-1,2,\frac{n}{2}-1,1,2n}]$.
\end{observation}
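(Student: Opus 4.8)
The plan is to follow the same direct execution of the continued-fraction algorithm used in the preceding observations, applied to $\alpha=\sqrt{n^2+2n-3}$. Writing $N=n^2+2n-3$ and observing that $(n+1)^2-N=4$, so that $n<\sqrt{N}<n+1$ and hence $a_0=n$, I would at each stage peel off the integer part and rationalize the remaining fractional part, always keeping the irrational quantity in the reduced form $\frac{\sqrt{N}+P}{Q}$ with $P,Q\in\mathbb{Z}$; the bookkeeping identity is that each successive denominator divides $N-P^2$. The period closes exactly when the term $2n+(\sqrt{N}-n)$ reappears, as in the palindrome argument behind Theorem~\ref{palindrom}.

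First I would carry out the two steps common to both parities. From $\sqrt{N}-n=\frac{2n-3}{\sqrt{N}+n}$ one reads off, after checking $1<\frac{\sqrt{N}+n}{2n-3}<2$ (which for integers holds precisely when $n\geq 4$), the partial quotient $a_1=1$; one further inversion brings the denominator to $4$ and the algorithm to $x_2=\frac{\sqrt{N}+(n-3)}{4}$. This is the branching point. Since $\sqrt{N}=\sqrt{(n+1)^2-4}$ satisfies $\frac{n-1}{2}-\frac14<x_2<\frac{n-1}{2}$, the floor $\lfloor x_2\rfloor$ equals $\frac{n-3}{2}$ when $n$ is odd (then $\frac{n-1}{2}\in\mathbb{Z}$) but $\frac{n}{2}-1$ when $n$ is even.

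In the odd branch I would continue from $a_2=\frac{n-3}{2}$: the states $x_2,x_3,x_4$ carry denominators $4,\,2n-3,\,1$ and partial quotients $\frac{n-3}{2},\,1,\,2n$, after which the data $(P,Q)=(n,2n-3)$ of $x_1$ recur, giving a period of length $4$ and $\sqrt{N}=[n,\overline{1,\frac{n-3}{2},1,2n}]$. In the even branch, from $a_2=\frac{n}{2}-1$ the states $x_2,\dots,x_6$ carry denominators $4,\,n-1,\,4,\,2n-3,\,1$ and partial quotients $\frac{n}{2}-1,\,2,\,\frac{n}{2}-1,\,1,\,2n$ before the data of $x_1$ recur, giving period $6$ and $\sqrt{N}=[n,\overline{1,\frac{n}{2}-1,2,\frac{n}{2}-1,1,2n}]$. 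In both cases the block before the final $2n$, namely $1,\frac{n-3}{2},1$ respectively $1,\frac{n}{2}-1,2,\frac{n}{2}-1,1$, is a palindrome, in agreement with Theorem~\ref{palindrom}.

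The main obstacle is the parity-dependent floor at the step with denominator $4$: one must localize $\sqrt{N}$ sharply enough relative to the half-integer $\frac{n-1}{2}$ to decide on which side $x_2$ falls, and this is exactly where $n>3$ is used. The same hypothesis also secures $a_1=1$ and keeps the reported partial quotients positive (namely $\frac{n-3}{2}\geq 1$ for odd $n$ and $\frac{n}{2}-1\geq 1$ for even $n$). Everything else reduces to the routine rationalizations $N-P^2=Q\,Q'$ at each step, which are mechanical once the branching is settled.
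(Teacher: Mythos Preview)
Your proposal is correct and follows essentially the same approach as the paper: a direct, step-by-step execution of the continued-fraction algorithm on $\sqrt{n^2+2n-3}$, rationalizing at each stage until the state $\sqrt{N}+n$ (equivalently the data of $x_1$) reappears. Your write-up is in fact a bit more streamlined than the paper's, since you identify the two initial steps common to both parities, pinpoint the branching at $x_2=\frac{\sqrt{N}+(n-3)}{4}$, and make explicit where the hypothesis $n>3$ enters; the paper simply runs the two cases in full without this commentary.
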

\begin{proof}
\noindent For $n$ odd:
$$\sqrt{n^2+2n-3}=$$
$$\begin{array}{c}
=n+(\sqrt{n^2+2n-3}-n)\\
=n+\cfrac{1}{\cfrac{\sqrt{n^2+2n-3}+n}{2n-3}}\\
=n+\cfrac{1}{\cfrac{2n-3}{2n-3}+\cfrac{\sqrt{n^2+2n-3}-(n-3)}{2n-3}}\\
=n+\cfrac{1}{1+\cfrac{1}{\cfrac{\sqrt{n^2+2n-3}+(n-3)}{4}}}\\
=n+\cfrac{1}{1+\cfrac{1}{\cfrac{2n-6}{4}+\cfrac{\sqrt{n^2+2n-3}-(n-3)}{4}}}\\
=n+\cfrac{1}{1+\cfrac{1}{\cfrac{n-3}{2}+\cfrac{1}{\cfrac{\sqrt{n^2+2n-3}+(n-3)}{2n-3}}}}\\
=n+\cfrac{1}{1+\cfrac{1}{\cfrac{n-3}{2}+\cfrac{1}{1+\cfrac{\sqrt{n^2+2n-3}-n}{2n-3}}}}\\
=n+\cfrac{1}{1+\cfrac{1}{\cfrac{n-3}{2}+\cfrac{1}{1+\cfrac{1}{2n+(\sqrt{n^2+2n-3}-n)}}}},\end{array}$$
thus $\sqrt{N}=[n,\overline{1,\frac{n-3}{2},1,2n}]$.

\noindent For $n$ even:
$$\sqrt{n^2+2n-3}=$$
$$\begin{array}{c}
=n+(\sqrt{n^2+2n-3}-n)\\
=n+\cfrac{1}{\cfrac{\sqrt{n^2+2n-3}+n}{2n-3}}\\
=n+\cfrac{1}{\cfrac{2n-3}{2n-3}+\cfrac{\sqrt{n^2+2n-3}-(n-3)}{2n-3}}\\
=n+\cfrac{1}{1+\cfrac{1}{\cfrac{\sqrt{n^2+2n-3}+(n-3)}{4}}}\\
=n+\cfrac{1}{1+\cfrac{1}{\cfrac{2n-4}{4}+\cfrac{\sqrt{n^2+2n-3}-(n-1)}{4}}}\\
=n+\cfrac{1}{1+\cfrac{1}{\cfrac{n}{2}-1+\cfrac{1}{\cfrac{\sqrt{n^2+2n-3}+(n-1)}{n-1}}}}\\
=n+\cfrac{1}{1+\cfrac{1}{\cfrac{n}{2}-1+\cfrac{1}{\cfrac{2n-2}{n-1}+\cfrac{\sqrt{n^2+2n-3}-(n-1)}{n-1}}}}\\
=n+\cfrac{1}{1+\cfrac{1}{\cfrac{n}{2}-1+\cfrac{1}{2+\cfrac{1}{\cfrac{\sqrt{n^2+2n-3}+(n-1)}{4}}}}}\\
=n+\cfrac{1}{1+\cfrac{1}{\cfrac{n}{2}-1+\cfrac{1}{2+\cfrac{1}{\cfrac{2n-4}{4}+\cfrac{\sqrt{n^2+2n-3}-(n-3)}{4}}}}}\\
=n+\cfrac{1}{1+\cfrac{1}{\cfrac{n}{2}-1+\cfrac{1}{2+\cfrac{1}{\cfrac{n}{2}-1+\cfrac{1}{\cfrac{\sqrt{n^2+2n-3}+(n-3)}{2n-3}}}}}}
\end{array}$$
$$\begin{array}{c}
=n+\cfrac{1}{1+\cfrac{1}{\cfrac{n}{2}-1+\cfrac{1}{2+\cfrac{1}{\cfrac{n}{2}-1+\cfrac{1}{\cfrac{2n-3}{2n-3}+\cfrac{\sqrt{n^2+2n-3}-n}{2n-3}}}}}}\\
=n+\cfrac{1}{1+\cfrac{1}{\cfrac{n}{2}-1+\cfrac{1}{2+\cfrac{1}{\cfrac{n}{2}-1+\cfrac{1}{1+\cfrac{1}{2n+(\sqrt{n^2+2n-3}-n)}}}}}}.\end{array}$$
Thus $\sqrt{N}=[n,\overline{1,\frac{n}{2}-1,2,\frac{n}{2}-1,1,2n}]$.

\end{proof}

\begin{observation} Let $k\in \mathbb N$. Let us summarize lengths $\ell$ of periods and the form of continued fractions for several classes (described in an analogous way) of $n$ and $j$.

\begin{tabular}{c|c|c|c}
$n$&$j$&$\ell$& $\sqrt{N}$\\
\hline
\hline
$5k+1, \ k\geq 1$& $\frac{4n+1}{5}$&$3$&$[n,\overline{2,2,2n}]$\\
\hline
$6k+5$& $\frac{2n-1}{3}$&$4$&$[n,\overline{3,\frac{n-1}{2},3,2n}]$\\
\hline
$9k+4, \ k\geq 1$&$n-2$ &$4$&$[n,\overline{2,\frac{2n-8}{9},2,2n}]$\\
\hline
$5k+4$& $\frac{8n+3}{5}$&$4$&$[n,\overline{1,3,1,2n}]$\\
\hline
$3k+2$& $\frac{5n+2}{3}$&$4$&$[n,\overline{1,4,1,2n}]$
\\
\hline
$3k+2, \ k\geq 1$&$2n-2$&$4$&$[n,\overline{1,\frac{2n-4}{3},1,2n}]$\\
\hline
$3k+2$&$\frac{4n+1}{3}$&$4$&$[n,\overline{1,1,1,2n}]$\\
\hline
$2k+1, \ k\geq 1$&$\frac{3n+1}{2}$&$4$&$[n,\overline{1,2,1,2n}]$\\
\hline
$5k+2, \ k\geq 1$&$n-1$&$4$&$[n,\overline{2,\frac{2n-4}{5},2,2n}]$\\
\hline
$6k+1, \ k\geq 1$&$\frac{5n+1}{6}$&$4$&$[n,\overline{2,2,2,2n}]$\\
\hline
$5k+3$&$\frac{6n+2}{5}$&$5$&$[n,\overline{1,1,1,1,2n}]$\\
\hline
$10k+7$&$\frac{2n+1}{5}$&$6$&$[n,\overline{4,1,\frac{n-3}{2},1,4,2n}]$\\
\hline
$3k+1, \ k\geq 1$&$\frac{2n+1}{3}$&$6$&$[n,\overline{2,1,n-1,1,2,2n}]$\\
\hline
$3k+1, \ k\geq 1$&$n+1$&$6$&$[n,\overline{1,1,\frac{2n-2}{3},1,1,2n}]$\\
\hline
$7k+3, \ k\geq 1$&$n+2$&$6$&$[n,\overline{1,1,\frac{2n-6}{7},1,1,2n}]$\\
\hline
$6k+4$&$\frac{7n+2}{6}$&$6$&$[n,\overline{1,1,2,1,1,2n}]$\\
\hline
$3k+1, \ k\geq 1$&$\frac{4n+2}{3}$&$6$&$[n,\overline{1,2,n,2,1,2n}]$\\
\hline
$7k+5$&$\frac{8n+2}{7}$&$8$&$[n,\overline{1,1,3,n,3,1,1,2n}]$\\
\hline
$6k+2, \ k\geq 1$&$\frac{2n-1}{3}$ &$8$&$[n,\overline{3,\frac{n-2}{2},1,4,1,\frac{n-2}{2}, 3,2n}]$\\

\end{tabular}
\end{observation}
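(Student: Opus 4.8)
The plan is to verify each row of the table independently by unfolding the continued fraction algorithm, exactly as in the preceding observations. The uniform device is to write every complete quotient arising in the expansion of $\sqrt{N}=\sqrt{n^2+j}$ in the canonical shape $\frac{\sqrt{N}+m_i}{d_i}$ with integers $m_i,d_i$ satisfying $d_i\mid N-m_i^2$. Starting from $m_0=0$, $d_0=1$, $a_0=n$ (legitimate because $1\le j\le 2n$ forces $n^2<N<(n+1)^2$), the partial quotients and parameters are generated by
$$a_{i}=\left\lfloor\frac{n+m_i}{d_i}\right\rfloor,\quad m_{i+1}=a_id_i-m_i,\quad d_{i+1}=\frac{N-m_{i+1}^2}{d_i}.$$
The period closes at the first index $\ell\ge 1$ with $d_\ell=1$, equivalently $a_\ell=2n$, and by Theorem~\ref{palindrom} the string $a_1\dots a_{\ell-1}$ is then automatically a palindrome.

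First I would dispatch the admissibility bookkeeping: for each row I substitute the stated class of $n$ into the given $j$ and check that $j$ is a positive integer in $\{1,\dots,2n\}$. For instance, in the first row $n=5k+1$ gives $j=\frac{4n+1}{5}=4k+1\in\mathbb N$, and side conditions such as $k\ge 1$ are exactly what guarantee that the middle partial quotients (e.g. $\frac{2n-8}{9}$ or $\frac{n-1}{2}$) are genuine positive integers. Next, for each family I would run the recurrence above symbolically in $n$, recording the sequence of pairs $(m_i,d_i)$ until $d_i$ returns to $1$, and read off the $a_i$; the resulting length and word are then compared with the table entry.

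The crux at every step is the evaluation of the floor $\left\lfloor(n+m_i)/d_i\right\rfloor$: one must establish the two-sided inequality $a_id_i\le n+m_i<(a_i+1)d_i$ to pin the integer part to the claimed value and exclude off-by-one errors. These inequalities hold only once $n$ is large enough relative to the fixed pattern of the period, which is precisely why several rows carry a hypothesis $k\ge 1$ (or $n>3$, etc.); locating the sharp threshold in each case is the genuinely delicate part. The remaining difficulty is purely one of volume --- nineteen separate families --- but each is mechanical once the recurrence is set up, and the palindrome symmetry from Theorem~\ref{palindrom} can be exploited to compute only as far as the centre of the period and obtain the second half for free.

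As an independent cross-check I would, at least for the short periods, run the converse computation: feed the claimed palindrome into the formula of the preceding theorem, which expresses $[n,\overline{a_1,\dots,a_r,2n}]$ as $\sqrt{n^2+\frac{2nK_{r-1}(a_1,\dots,a_{r-1})+K_{r-2}(a_2,\dots,a_{r-1})}{K_r(a_1,\dots,a_r)}}$, evaluate the continuants (low-degree polynomials in $n$ via Theorem~\ref{CF_polynomials} and Theorem~\ref{symetrie}), and confirm that the fraction reduces to the tabulated $j$. Since each $a_i$ given in the table is a positive integer strictly below $2n$ for the admissible range of $n$, uniqueness of the continued fraction expansion then certifies both that the displayed word is the canonical period and that its length is exactly the stated $\ell$.
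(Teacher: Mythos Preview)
Your proposal is correct and follows essentially the same approach as the paper: the paper itself defers the proof of this observation to~\cite{SOC}, but the method you describe---running the continued fraction algorithm symbolically in $n$ until the complete quotient returns to $\sqrt{N}+n$---is precisely what the paper carries out in longhand for the surrounding Observations~4--6. Your formulation via the standard $(m_i,d_i)$ recurrence is simply a tidier packaging of those nested-fraction displays, and the continuant cross-check you add (inverting the formula proved just after Theorem~\ref{palindrom}) is a sound supplementary verification that the paper does not use.
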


\begin{observation}
If the period of the continued fraction of $\sqrt{N}=\sqrt{n^2+j}$ contains $p\geq 1$ ones as its palindromic part, i.e., $\sqrt{N}=[n,\overline{\underbrace{1,\dots,1}_p,2n}]$ then $p+1 \not =3k$, where $k\in \mathbb N$, and $j=\frac{2nF_{p-1}+F_{p-2}}{F_{p}}$, where $F_n$ denotes the $n$-th Fibonacci number given by the recurrence relation $F_{-1}=0, \ F_0=1$ and $F_{n}=F_{n-2}+F_{n-1}$ for all $n \geq 1$.
\end{observation}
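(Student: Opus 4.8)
The plan is to read off the closed form for $j$ directly from the reconstruction theorem proved just above, and then to extract the arithmetic restriction $p+1\neq 3k$ from the sole requirement that $j$ be a natural number. So there are really two tasks: a substitution that produces the Fibonacci formula, and a divisibility argument that produces the constraint.

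First I would specialize the reconstruction formula
$$x=\sqrt{n^2+\frac{2nK_{r-1}(a_1,\dots,a_{r-1})+K_{r-2}(a_2,\dots,a_{r-1})}{K_r(a_1,\dots,a_r)}}$$
to the present palindrome $a_1=\cdots=a_r=1$ with $r=p$. The key observation is that the continuant $K_m(1,\dots,1)$ (with $m$ entries, all equal to $1$) satisfies, by Theorem~\ref{CF_polynomials}, the recurrence $K_m=K_{m-2}+K_{m-1}$ together with $K_{-1}=0$ and $K_0=1$; this is \emph{verbatim} the defining recurrence of $F_m$, so $K_m(1,\dots,1)=F_m$ for every $m$. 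Counting arguments gives $K_r=F_p$, $K_{r-1}=F_{p-1}$ and $K_{r-2}=F_{p-2}$, and substituting yields $j=N-n^2=\frac{2nF_{p-1}+F_{p-2}}{F_p}$ at once.

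For the constraint I would use that $j\in\mathbb N$, so $F_p\mid 2nF_{p-1}+F_{p-2}$. Reducing modulo $F_p$ and using $F_p=F_{p-1}+F_{p-2}$ gives $F_{p-2}\equiv -F_{p-1}\pmod{F_p}$, whence $2nF_{p-1}+F_{p-2}\equiv(2n-1)F_{p-1}\pmod{F_p}$. Since consecutive Fibonacci numbers are coprime, $\gcd(F_{p-1},F_p)=1$ (which follows from the recurrence by a one-line descent, as $\gcd(F_{p-1},F_p)=\gcd(F_{p-2},F_{p-1})=\cdots=\gcd(F_{-1},F_0)=1$), the integrality of $j$ is therefore equivalent to the single congruence $F_p\mid 2n-1$.

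Finally, $2n-1$ is odd, so $F_p\mid 2n-1$ forces $F_p$ to be odd. The parities of the $F_m$ repeat with period three as odd, odd, even (starting from $F_0=1$), since $F_{-1}=0$ is even and each term is the sum of the two preceding ones; concretely $F_m$ is even exactly when $3\mid m+1$. Hence $F_p$ is odd if and only if $3\nmid p+1$, i.e.\ $p+1\neq 3k$, which is the asserted restriction. The formula part is a pure substitution once $K_m(1,\dots,1)=F_m$ is noticed; the step demanding the most care is the reduction of the integrality of $j$ to the clean condition $F_p\mid 2n-1$, after which the periodicity of the Fibonacci sequence modulo $2$ closes the argument immediately.
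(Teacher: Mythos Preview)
The paper does not actually supply a proof of this observation; it is stated without argument, with the proofs of the unproved observations deferred to~\cite{SOC}. Your proof is correct and self-contained, and it is in fact the natural argument given the tools the paper has assembled: specializing the reconstruction formula from the theorem following Theorem~\ref{palindrom} to $a_1=\cdots=a_p=1$ immediately yields the Fibonacci expression for $j$, since $K_m(1,\dots,1)=F_m$ is literally the defining recurrence. Your reduction of the integrality condition to $F_p\mid 2n-1$ via $F_{p-2}\equiv -F_{p-1}\pmod{F_p}$ and $\gcd(F_{p-1},F_p)=1$ is clean, and the parity argument ($F_p$ even $\Leftrightarrow$ $3\mid p+1$ in this indexing) then gives the constraint $p+1\neq 3k$ directly. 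One could alternatively derive the formula for $j$ by running the continued-fraction algorithm explicitly as in the proofs of the earlier observations, but your route via the general reconstruction theorem is both shorter and more conceptual, and it has the side benefit of producing the exact residue class of $n$ (namely $2n\equiv 1\pmod{F_p}$) for which such an expansion occurs.
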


We made also one observation that turned out to be false.
\begin{observation} For $\sqrt{N}$ the length of the period of the continued fraction is less than or equal to $2n$.
\end{observation}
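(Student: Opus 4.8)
Because the paper itself records this observation as false, a ``proof'' here really means a disproof, and the plan is to exhibit a single $N$ whose period exceeds $2n$ and then explain why such $N$ are forced to exist. The refutation itself needs only one example. To produce it I would compute periods exactly, avoiding floating point, through the complete quotients: writing the $k$-th complete quotient of $\sqrt N$ as $\frac{P_k+\sqrt N}{Q_k}$ with $P_0=0$, $Q_0=1$, one has the integer recurrences $a_k=\lfloor\frac{P_k+n}{Q_k}\rfloor$, $P_{k+1}=a_kQ_k-P_k$ and $Q_{k+1}=\frac{N-P_{k+1}^2}{Q_k}$, where $n=\lfloor\sqrt N\rfloor$. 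By Theorem~\ref{palindrom} the period closes when the complete quotient returns to $n+\sqrt N$, i.e. at the first index $k\ge 1$ with $Q_k=1$, where necessarily $a_k=2n$; so $\ell(N)=k$ is read off exactly and compared with $2n$. Running this for successive non-square $N$ and reporting the first with $\ell(N)>2n$ settles the matter, and verifying the winner afterwards is pure integer arithmetic.

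The reason no small $N$ refutes the observation — presumably why it was believed, since for $N=3,7,46,\dots$ the bound even holds with equality — is that $\ell(N)$ is, up to the sizes of the partial quotients, governed by the regulator $R_N=\log\varepsilon_N$ of $\mathbb{Q}(\sqrt N)$. One full period (or two) produces the fundamental Pell solution $p_{\ell-1}+q_{\ell-1}\sqrt N$, so $R_N\approx\log q_{\ell-1}$, and since $\log q_{\ell-1}$ grows by roughly a bounded amount per step, $\ell(N)\asymp R_N$. By the analytic class-number formula $h_NR_N\asymp\sqrt N\,L(1,\chi_N)$, so $\ell(N)\asymp \sqrt N\,L(1,\chi_N)/h_N$, which overtakes $2\sqrt N\approx 2n$ exactly when $L(1,\chi_N)$ is large and $h_N$ is small. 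Now $L(1,\chi_N)=\prod_p(1-\chi_N(p)/p)^{-1}$ is large precisely when many small primes split, that is, when $N$ is a quadratic residue modulo many small primes. Hence I would not search blindly but among $N\equiv 1\pmod 8$ that are quadratic residues modulo $3,5,7,\dots$, where the ratio $\ell(N)/(2n)$ first climbs above $1$.

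That counterexamples exist at all — indeed infinitely many — follows because $L(1,\chi_N)$ is unbounded: forcing more and more small primes to split drives it without bound (it grows like $\log\log N$ for the extremal $N$), so $\ell(N)/\sqrt N$ is unbounded and $\ell(N)\le 2n$ cannot hold for every $N$. The main obstacle is exactly this tension: the inequality is true, often sharply, all through the small range one naturally inspects, so the refutation forces one either to push the search into the range where some $L(1,\chi_N)>2$ while $h_N$ and the partial quotients cooperate, or to invoke the analytic estimate above. Turning that estimate into a rigorous lower bound with a clean enough constant to beat the factor $2$ (rather than merely $\ell(N)\gg\sqrt N$) is the delicate point; producing a single explicitly verified $N$ via the algorithm sidesteps it entirely and is what I would ultimately present.
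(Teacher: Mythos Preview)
Your proposal is correct in substance: the observation is false, and a single explicit counterexample suffices. The paper's own treatment is much more minimal, however --- it simply cites Hickerson's result that for $N=1726$ (so $n=41$) the period has length $88>82=2n$, with no algorithm and no analytic discussion.

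What you do differently is twofold. First, you describe the standard $(P_k,Q_k)$ recursion for computing the period exactly and propose to search; this is sound and would indeed locate $N=1726$ as the first failure (so your final write-up should just run it and report that number). Second, you supply a heuristic via the class-number formula, $\ell(N)\asymp\sqrt{N}\,L(1,\chi_N)/h_N$, explaining both why the bound holds throughout the table $N\le 1000$ and why it must eventually fail. This is genuinely informative and goes well beyond what the paper offers, though as you yourself note it is not a rigorous proof that counterexamples exist --- the constants are too soft to beat the factor~$2$ cleanly. The paper sidesteps all of this by citing the literature; your approach is self-contained and explains the phenomenon, at the cost of needing the computation actually carried out before it counts as a disproof.
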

This observation was made when contemplating a~table of periods of $\sqrt{N}$ for $N \leq 1000$.
However, in~\cite{Hi} it is shown that for $N=1726$ with $n=41$, the period of the continued fraction of $\sqrt{N}$ is of length $88 > 82=2n$.
A~rougher upper bound comes from~\cite{Si}.
\begin{theorem}
For $\sqrt{N}$ the length of the period of the continued fraction is less than or equal to $2N$.
\end{theorem}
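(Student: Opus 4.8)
The plan is to track the complete quotients of the continued fraction of $\sqrt N$ explicitly and to show that they can assume only boundedly many values, the bound being controlled by $N$. Writing $\alpha_0 = \sqrt N$ and $\alpha_{k+1} = 1/(\alpha_k - \lfloor \alpha_k \rfloor)$, I would first prove by induction that every complete quotient has the shape $\alpha_k = \frac{P_k + \sqrt N}{Q_k}$ with $P_k, Q_k \in \mathbb Z$, $Q_k \neq 0$, and $Q_k \mid N - P_k^2$; the recurrences $P_{k+1} = a_k Q_k - P_k$ and $Q_{k+1} = (N - P_{k+1}^2)/Q_k$ keep all quantities integral. This is the routine but indispensable bookkeeping step that reduces the whole problem to counting integer pairs $(P_k, Q_k)$.

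Next I would pin down the sizes of $P_k$ and $Q_k$ for $k \ge 1$. The key input is that $n + \sqrt N$ is a reduced quadratic irrational, so by Theorem~\ref{Galois} its continued fraction is purely periodic; since $\sqrt N = [n, \overline{a_1, \dots, a_r, 2n}]$ shares the same complete quotients from index~$1$ onward, and every complete quotient of a purely periodic continued fraction is again purely periodic, each $\alpha_k$ with $k \ge 1$ is reduced, that is, $\alpha_k > 1$ and $-1 < \alpha_k' < 0$. Writing out these two inequalities for $\alpha_k = \frac{P_k + \sqrt N}{Q_k}$ (using $\alpha_k' = \frac{P_k - \sqrt N}{Q_k}$) yields $Q_k > 0$, then $0 < P_k < \sqrt N$, and finally $\sqrt N - P_k < Q_k < \sqrt N + P_k$.

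With these bounds the count is immediate. The period length of $\sqrt N$ equals the number of distinct complete quotients occurring in the period (two coinciding complete quotients inside one period would force a shorter period, contradicting minimality), and each such $\alpha_k$ is determined by the integer pair $(P_k, Q_k)$. Since $0 < P_k < \sqrt N$ forces $P_k \in \{1, \dots, n\}$, and for each fixed value of $P_k$ the constraint $\sqrt N - P_k < Q_k < \sqrt N + P_k$ confines $Q_k$ to an open interval of length $2P_k$, hence to at most $2P_k$ integers, the number of admissible pairs is at most $\sum_{P=1}^{n} 2P = n(n+1)$. Because $N = n^2 + j$ with $j \ge 1$, one checks $n(n+1) \le 2n^2 < 2N$, so the period length is at most $2N$, indeed comfortably below it.

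The main obstacle I anticipate is not the final counting but the justification of reducedness and the resulting arithmetic inequalities on $(P_k, Q_k)$: one must argue cleanly that all complete quotients past the first are reduced, leaning on Theorem~\ref{Galois} together with the fact that shifting a purely periodic expansion produces another purely periodic expansion, and then faithfully convert the geometric condition "reduced" into the inequalities $0 < P_k < \sqrt N$ and $\sqrt N - P_k < Q_k < \sqrt N + P_k$. Once that is secured, weakening the sharp estimate $n(n+1)$ to the cruder $2N$ is trivial and matches the deliberately rough bound stated in the theorem.
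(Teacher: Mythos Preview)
The paper does not actually supply a proof of this theorem: it merely quotes the bound and attributes it to Sierpinski~\cite{Si}. So there is no in-paper argument to compare against.

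Your proposal is correct and is essentially the classical argument one finds in Sierpinski's text. The three ingredients are exactly right: (i) the integrality and divisibility of the pairs $(P_k,Q_k)$ via the standard recursion; (ii) reducedness of all complete quotients $\alpha_k$ for $k\ge 1$, which you obtain from Theorem~\ref{Galois} applied to $n+\sqrt N$ together with the observation that a shift of a purely periodic expansion is again purely periodic; and (iii) the translation of ``reduced'' into the box $0<P_k<\sqrt N$, $\sqrt N-P_k<Q_k<\sqrt N+P_k$, followed by the count $\sum_{P=1}^{n}2P=n(n+1)<2N$. Your derivation of $P_k>0$ is implicit but follows at once from combining $Q_k>\sqrt N-P_k$ with $Q_k<\sqrt N+P_k$; it may be worth writing that one line explicitly. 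The remark that distinct indices within a single minimal period must give distinct complete quotients is the correct way to turn the pair count into a period bound.

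In short: the paper gives no proof, and your outline reproduces the standard Sierpinski argument faithfully, with the sharper intermediate estimate $n(n+1)$ before relaxing to $2N$.
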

Let us terminate with two observations that have not been proved yet.
\begin{observation}
No element of the period of $\sqrt{N}$ apart from the last one is bigger than $n$.
\end{observation}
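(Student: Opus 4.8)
The plan is to analyse the complete quotients of the purely periodic number $\alpha=n+\sqrt N$ after writing each of them in the standard integer normal form $\frac{P+\sqrt N}{Q}$. As observed in the proof of Theorem~\ref{palindrom}, $\alpha=n+\sqrt N$ is a reduced quadratic irrational whose continued fraction is purely periodic, say $\alpha=[\overline{2n,a_1,\dots,a_r}]$, so that $\sqrt N=[n,\overline{a_1,\dots,a_r,2n}]$ and the period has length $\ell=r+1$. Write $\beta_0=\alpha,\beta_1,\beta_2,\dots$ for the complete quotients of $\alpha$, so that $\lfloor\beta_0\rfloor=2n$, $\lfloor\beta_k\rfloor=a_k$ for $1\le k\le r$, and $\beta_{k+\ell}=\beta_k$. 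The claim is exactly that $a_1,\dots,a_r\le n$, the last period element $2n$ being the only exception.

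First I would record that every $\beta_k$ is reduced. Indeed, $\beta_k$ has the purely periodic continued fraction obtained by reading the period of $\alpha$ cyclically from its $k$-th term, so by the converse direction of Theorem~\ref{Galois} we have $\beta_k>1$ and $\beta_k'\in(-1,0)$. Next I would use the standard fact that the complete quotients of $\sqrt N$ can be written as $\beta_k=\frac{P_k+\sqrt N}{Q_k}$ with $P_k,Q_k\in\mathbb Z$ and $Q_k\mid N-P_k^2$, the integers being generated by $P_{k+1}=a_kQ_k-P_k$ and $Q_{k+1}=(N-P_{k+1}^2)/Q_k$. The condition $\beta_k'\in(-1,0)$ translates into $0<P_k<\sqrt N$ and $0<\sqrt N-P_k<Q_k$; in particular $Q_k\ge 1$, and since $n<\sqrt N<n+1$ we also get $P_k\le n$.

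The decisive step is then a case distinction on $Q_k$. If $Q_k\ge 2$, then
$$a_k=\left\lfloor\frac{P_k+\sqrt N}{Q_k}\right\rfloor\le\left\lfloor\frac{n+\sqrt N}{2}\right\rfloor<\frac{n+(n+1)}{2}=n+\tfrac12,$$
so that $a_k\le n$. If instead $Q_k=1$, then $\beta_k=P_k+\sqrt N$, and the inequality $\sqrt N-P_k<Q_k=1$ forces $P_k>\sqrt N-1>n-1$, hence $P_k=n$ and $\beta_k=n+\sqrt N=\alpha$, giving $a_k=\lfloor n+\sqrt N\rfloor=2n$. Thus a partial quotient equals $2n$ precisely when its complete quotient is $\alpha$ itself; by uniqueness of the continued fraction together with periodicity this occurs exactly at the indices that are multiples of $\ell$, i.e. at the terms $2n$ closing each period. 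All interior partial quotients $a_1,\dots,a_r$ therefore have $Q_k\ge 2$ and are at most $n$, which is the assertion.

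The main obstacle I anticipate is the preliminary bookkeeping: justifying the normal form $\beta_k=\frac{P_k+\sqrt N}{Q_k}$ with integer $P_k,Q_k$, and deriving the inequalities $0<P_k<\sqrt N$ and $\sqrt N-P_k<Q_k$ from $\beta_k'\in(-1,0)$. Once these standard facts are established, the estimate above is immediate. A second point to argue carefully is that $Q_k=1$ cannot occur strictly inside a period: if it did, the complete quotient would equal $\alpha$ and the expansion would restart there, contradicting the minimality of the period length $\ell$.
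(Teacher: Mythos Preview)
Your argument is correct, and in fact the paper does not prove this observation at all: it is one of the two statements the authors explicitly list at the end as ``observations that have not been proved yet''. So there is no proof in the paper to compare with; you have supplied one where the paper leaves a gap.

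The approach you take is the classical one: put each complete quotient in the integer normal form $\beta_k=(P_k+\sqrt N)/Q_k$, use reducedness (via Theorem~\ref{Galois}) to get $0<P_k\le n$ and $Q_k>\sqrt N-P_k>0$, and then split on whether $Q_k\ge 2$ or $Q_k=1$. Both cases are handled correctly: when $Q_k\ge2$ the bound $a_k\le n$ is immediate, and when $Q_k=1$ you force $P_k=n$, hence $\beta_k=\alpha$, which by minimality of the period length can only happen at the end of a period. The only places where a reader might want more detail are the two you flagged yourself: the integrality of $P_k,Q_k$ with $Q_k\mid N-P_k^2$ (a short induction using the recursions $P_{k+1}=a_kQ_k-P_k$, $Q_{k+1}=(N-P_{k+1}^2)/Q_k$), and the derivation of $Q_k>0$, $P_k>0$ from $\beta_k>1$, $\beta_k'\in(-1,0)$ (which follows from $\beta_k-\beta_k'=2\sqrt N/Q_k>0$ and $\beta_k+\beta_k'=2P_k/Q_k>0$). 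Once these standard facts are written out, the proof is complete and tight.
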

\begin{observation}
There is no period of an odd length for $j=4k+3$, where $k\in \mathbb N$.
\end{observation}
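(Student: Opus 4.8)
The plan is to connect the parity of the period length $\ell$ to the solvability of the \emph{negative Pell equation} $x^2 - Ny^2 = -1$, and then to exclude that equation by a congruence argument modulo $4$.

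\textbf{Step 1 (an odd period forces a solution of negative Pell).} Write $\sqrt N = [n,\overline{a_1,\dots,a_r,2n}]$ by Theorem~\ref{palindrom}, so the period length is $\ell = r+1$, and let $\frac{p_i}{q_i}$ denote the convergents of $\sqrt N$. After reading the partial quotients $n, a_1, \dots, a_r$, the complete quotient of $\sqrt N$ is $[\overline{2n,a_1,\dots,a_r}]$, which is exactly the purely periodic number $\alpha = n + \sqrt N$ appearing in the proof of Theorem~\ref{palindrom}. Substituting $\alpha = n+\sqrt N$ into the convergent representation $\sqrt N = \frac{\alpha\, p_{\ell-1}+p_{\ell-2}}{\alpha\, q_{\ell-1}+q_{\ell-2}}$ and separating the rational part from the $\sqrt N$-part, I would obtain $p_{\ell-1}=n q_{\ell-1}+q_{\ell-2}$ and $N q_{\ell-1}=n p_{\ell-1}+p_{\ell-2}$. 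Feeding these two relations into the determinant identity $p_{\ell-1}q_{\ell-2}-p_{\ell-2}q_{\ell-1}=(-1)^{\ell}$ (which follows from the continuant recurrence of Theorem~\ref{CF_polynomials}) makes the mixed terms cancel and yields the key identity
\[
p_{\ell-1}^{2}-N\,q_{\ell-1}^{2}=(-1)^{\ell}.
\]
In particular, if $\ell$ is odd then $(x,y)=(p_{\ell-1},q_{\ell-1})$ solves $x^2 - Ny^2 = -1$.

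\textbf{Step 2 (the negative Pell equation is unsolvable when $j=4k+3$).} Since $N=n^2+j$ and any square is congruent to $0$ or $1$ modulo $4$, I compute that $N\equiv 3 \pmod 4$ when $n$ is even, and $N\equiv 1+3\equiv 0 \pmod 4$ when $n$ is odd. If $N\equiv 0\pmod 4$, then $x^2-Ny^2\equiv x^2\equiv -1\equiv 3\pmod 4$, which is impossible because squares are $\equiv 0,1\pmod 4$. If $N\equiv 3\pmod 4$, then $-N\equiv 1\pmod 4$, so $x^2-Ny^2\equiv x^2+y^2\equiv 3\pmod 4$, again impossible since a sum of two squares modulo $4$ lies in $\{0,1,2\}$. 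Hence $x^2-Ny^2=-1$ has no integer solution in either case.

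\textbf{Conclusion and main obstacle.} Combining the two steps by contraposition: if $\ell$ were odd, Step~1 would produce a solution of $x^2-Ny^2=-1$, contradicting Step~2. Therefore the period length $\ell$ must be even. I expect the genuine work to lie entirely in Step~1 — correctly identifying the returning complete quotient as $n+\sqrt N$ and tracking the sign $(-1)^{\ell}$ through the determinant identity so that the key identity emerges with the right exponent — whereas the modular argument of Step~2, once the two parity subcases of $n$ are separated, is routine.
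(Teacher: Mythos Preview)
Your argument is correct, and in fact it goes beyond what the paper does: the paper explicitly lists this observation among ``two observations that have not been proved yet'' and offers no proof at all. Your two-step strategy --- first deriving the classical identity $p_{\ell-1}^{2}-Nq_{\ell-1}^{2}=(-1)^{\ell}$ from the return of the complete quotient to $n+\sqrt N$, and then ruling out the negative Pell equation $x^2-Ny^2=-1$ by reducing modulo $4$ in the two parity cases for $n$ --- is the standard and cleanest way to settle this question, and every step you outline checks out. So rather than matching or diverging from the paper's proof, you are supplying a proof where the paper has none; it would be worth noting in your write-up that the Pell identity in Step~1 is classical (it appears, for instance, in Sierpinski's book cited as~\cite{Si}), so that the genuinely new content is the easy congruence obstruction in Step~2.
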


\thanks 
We acknowledge financial support by the Czech Science Foundation
grant 201/09/0584, by the grants MSM6840770039 and LC06002 of the
Ministry of Education, Youth, and Sports of the Czech Republic,
and by the Grant Agency of the Czech Technical
University in Prague, grant No. SGS11/162/OHK4/3T/14.



\begin{figure}[!h]
\begin{center}
\resizebox{10 cm}{!}{\includegraphics{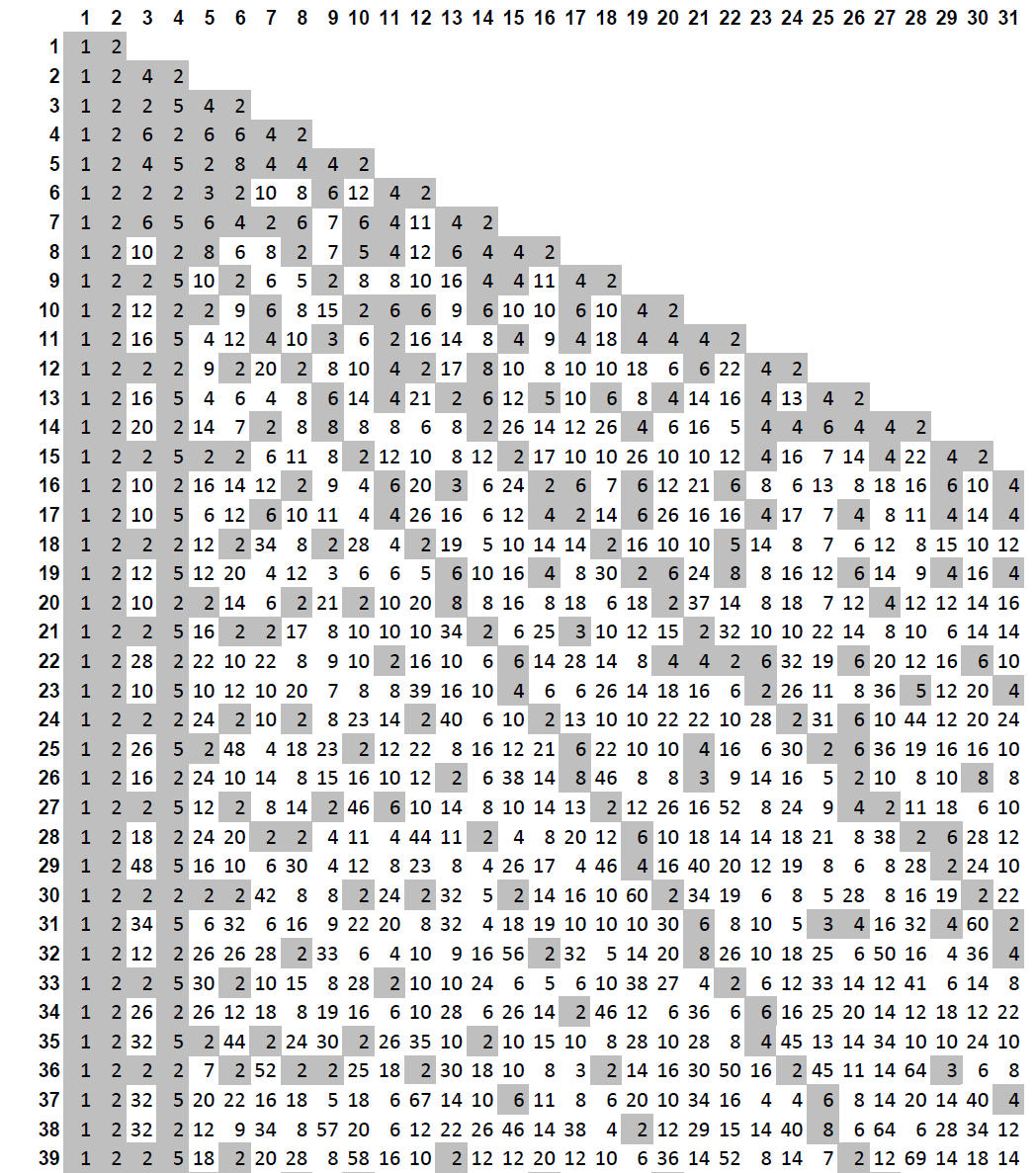}}
\caption{All classes of $n\leq 39$ (first column) and $j\leq 31$ (first row) for which their continued fractions of $\sqrt{N}=\sqrt{n^2+j}$ have been described are highlighted.}
\end{center}
\label{tabulka_hotovo}
\end{figure}
\end{document}